\newcommand{\inj}{\hookrightarrow}
\newtheorem{theorem}{Theorem}[section]
\newtheorem{remark}{Remark}[section]
\newtheorem{proposition}{Proposition}[section]
\newtheorem{lemma}{Lemma}[section]
\journal{Elsevier}
\begin{document}

\begin{frontmatter}



\title{Well-posedness of a Debye type 
system endowed with a full wave equation.}


\author{Arnaud Heibig
\footnote{Corresponding author.  
E-mail adress: arnaud.heibig@insa-lyon.fr Fax: +33 472438529} 
}
\address{Universit\'e de Lyon, Institut Camille Jordan, 
INSA-Lyon,  B\^at. Leonard de Vinci No. 401, 21 Avenue Jean Capelle, 
F-69621, Villeurbanne, France.}

\begin{abstract}
We prove well-posedness for a transport-diffusion problem coupled with a wave
equation for the potential. We assume
that the initial data are small. A bilinear form in the
spirit of Kato's proof for the Navier-Stokes
equations is used, coupled with 
suitable estimates 
in Chemin-Lerner spaces. In the one dimensional case,
we get well-posedness for arbitrarily large initial data by using
Gagliardo-Nirenberg inequalities.
\end{abstract}

\begin{keyword}
Transport-diffusion equation \sep 
wave equation \sep  
Debye system \sep Chemin-Lerner spaces \sep
Gagliardo-Nirenberg inequalities.

\end{keyword}

\end{frontmatter}


\section{Introduction.}\label{intro}
Transport-diffusion equations
have a vast phenomenology and have been widely studied. See, among others,
\cite{bd}, \cite{bwn}, 
\cite{kn}, 
\cite{w} in the case
of the semi-conductor theory,
and \cite{cs} in the case of Fokker-Planck equations. The goal of this
note is to prove existence and
uniqueness of the solution for
a modify semi-conductor equation.

In order to simplify the presentation,
we restrict to the case of a single
electrical charge. The novelty of
our equations is that we replace the
Poisson equation on the potential
by a wave equation. This is a quite natural change, 
since the electric charge itself 
depends on the time. From a mathematical
point of view, switching from a Poisson
equation to  a wave equation roughly
amounts to the loss of one derivative
in the estimates on the potential. Moreover,
it seems that one is bound to work in 
$L^p_t$ spaces with $1\leq p \leq2$ due 
to the usual Strichartz estimates. 

In
this paper, we prove the existence of a 
mild
solution
in Chemin-Lerner spaces
$\tilde{L}^1
(0, T, \\{\dot{H}}^{n/2-1}(\mathbb{R}^{n}))$.
We first restrict to the case of small initial data ($n\geq 2$), 
and use a variant of the Picard fixed point
theorem as in the proof
of Kato's and Chemin's theorems for the Navier-Stokes (and related)
equations. 
See \cite{we}, 
\cite{ka}, \cite{ch} and also
\cite{le}, \cite{bcd}. In particular, we work in homogeneous 
Sobolev spaces 
in order to get
$T$-independent estimates for the heat equation. Note also that our bilinear
form depends on a nonlocal term,
given as the solution of the wave equation
on the 
potential.

In the case $n=1$, well posedness is 
established for arbitrary large
initial data (section $\ref{prelim}$). Local well posedness is 
obtained as
in section $\ref{exit}$. The global existence
is proved by combining the usual 
$L^1$ estimate with 
a Gagliardo-Nirenberg inequality, in the spirit of \cite{bwn}.

\section{Equations and preliminary results.}\label{mappa}

We begin with some notations. In this section $n\geq 2$,
$T>0$, and $s<n/2$ are given.  
The homogeneous Sobolev 
spaces
${\dot{H}}^s
(\mathbb{R}^n)$ are often denoted by 
${\dot{H}}^s$. For $p\geq 1$,
we also
use the Chemin-Lerner spaces 
$\tilde{L}^p
(0, T, {\dot{H}}^s(\mathbb{R}^n))
= \tilde{L}^p
(0, T, {\dot{B}}^{s}_{2, 2}(\mathbb{R}^n))$,
or simply
$\tilde{L}^p_T
({\dot{H}}^s)$.
Recall that a distribution $f
\in\mathscr{S}'\big(]0, T[\times 
\mathbb{R}^n)$ belongs to the space $\tilde{L}^p_T
({\dot{H}}^s)$ 
iff  
${\dot{S}}_{j}f \rightarrow
 0$ in $\mathscr{S}'$ for $j \rightarrow 
 -\infty$,
and
$\Vert f 
\Vert_{\tilde{L}^p_T
({\dot{H}}^s)}
:=
\Vert
(2^{js}\Vert {\dot{\Delta}}_{j}f 
\Vert_{L^p_T(L^{2})})_{j\in \mathbb{Z}}
\Vert_{l^2(\mathbb{Z})}
< \infty$. Here, 
${\dot{S}}_{j}f$ and
 ${\dot{\Delta}}_{j}f$
 are respectively the low frequency
 cut-off and the homogeneous dyadic
 block defined by the usual
 Paley-Littlewood decomposition.
 See \cite{bcd} p.98 for details. Last, we write
 $\nabla$ for the (spatial) gradient, 
 $div$
 for the divergence and  $\Delta = div\nabla$.
 
We now give the equations we are dealing with.
Set 
$s=n/2-1$. 
Consider the Cauchy problem on the scalar valued functions
$u$ and $V$ defined on $\mathbb{R}_+\times \mathbb{R}_x^n$
  \begin{align}
&
\partial_t u -\Delta u = div(u\nabla V)\label{eq1}\\
&\partial_{tt} V -\Delta V = u\label{eq2}\\
&u(0)= u_0\label{eq3}\\
&V(0)= V_0, V_t(0)= V_1\label{eq4}
\end{align}
 For $u_0\in {\dot{H}}^s$,
 $(\nabla V_0, V_1)\in {\dot{H}}^s
\times {\dot{H}}^s$ and 
 $u\in \tilde{L}^1_T
({\dot{H}}^s)$
 given, we denote by 
 $S(u, V_0, V_1)\in C^0\big(0, T, \mathscr{S}'(\mathbb{R}^n)\big)$
 the unique solution of the wave equation $\ref{eq2}$, $\ref{eq4}$. 
 With these 
 notations, the system 
 $\ref{eq1}-\ref{eq4}$ is interpreted
 as the following problem (P):
 
find 
$u\in \tilde{L}^1_T({\dot{H}}^s)$
 such that
  \begin{align}
&
\partial_t u -\Delta u = div(u\nabla S(u, V_0, V_1))\label{eq5}\\
&u(0)= u_0\label{eq6}
\end{align} 

For future reference, we recall a
standard result on the heat equation
(see \cite{bcd} p.157)
 \begin{proposition}\label{heat}
 Let $T> 0$, $\sigma \in \mathbb{R}^n$
 and $1\leq p\leq \infty$.
 Assume that $u_0\in 
{\dot{H}}^{\sigma}$ and
 $f\in\tilde{L}^p_T
 ({\dot{H}}^{\sigma-2+\frac{2}{p}})$. Then the problem
 \begin{align}
 &
\partial_t u -\Delta u = f\label{eq7}\\
&u(0)= u_0\label{eq8}
\end{align}
admits a unique solution 
$u\in
\tilde{L}^p_T
 ({\dot{H}}^{\sigma+\frac{2}{p}})\cap
 \tilde{L}^{\infty}_T
 ({\dot{H}}^{\sigma})$ and there
 exists $C> 0$ independent of $T$ such that,
 for any $q\in [p, \infty]$
 \begin{align}
 \Vert u
 \Vert_{\tilde{L}^q_T
 ({\dot{H}}^{\sigma+\frac{2}{q}})}
 \leq
 C\big(
 \Vert f
 \Vert_{\tilde{L}^p_T
 ({\dot{H}}^{\sigma-2+\frac{2}{p}})}
 +
  \Vert u_0
 \Vert_{
 {\dot{H}}^{\sigma}}
\big)
  \end{align}
Moreover, for $f=0$, we have $u\in C^0([0, T], {\dot{H}}^{\sigma})
\inj L^1([0, T], {\dot{H}}^{\sigma})$.

 The same statements hold true in nonhomogeneous Sobolev spaces
 with a constant $C = C_T$ depending
 on $T$. 
  \end{proposition}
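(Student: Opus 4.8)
The plan is to reduce everything to a single frequency-localized estimate for the heat semigroup and then reassemble the Chemin-Lerner norms via Young's inequality in time followed by $\ell^2$ summation in the dyadic index. First I would write the solution through Duhamel's formula,
$$u(t) = e^{t\Delta}u_0 + \int_0^t e^{(t-\tau)\Delta} f(\tau)\,d\tau,$$
and apply the homogeneous block ${\dot{\Delta}}_j$, which commutes with $e^{t\Delta}$. The cornerstone is the standard smoothing estimate recalled in \cite{bcd}: there exist $C, c > 0$ such that, for every $j\in\mathbb{Z}$ and every $g$,
$$\Vert e^{t\Delta}{\dot{\Delta}}_j g\Vert_{L^2} \leq C\, e^{-c\, 2^{2j} t}\,\Vert {\dot{\Delta}}_j g\Vert_{L^2}.$$
All the analytic content is contained in this exponential decay at frequency $2^j$; everything else is bookkeeping. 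The reason Chemin-Lerner norms are the right setting is precisely that one may take the $L^q_T$ time norm block by block before summing in $j$, which is exactly what the two steps below exploit.

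For the free term $e^{t\Delta}u_0$, I would bound $\Vert e^{-c2^{2j}t}\Vert_{L^q(0,T)} \leq \Vert e^{-c2^{2j}t}\Vert_{L^q(\mathbb{R}_+)} = C\, 2^{-2j/q}$, which is finite and $T$-independent precisely because we work in homogeneous spaces. Multiplying by the weight $2^{j(\sigma+2/q)}$ leaves $2^{j\sigma}\Vert {\dot{\Delta}}_j u_0\Vert_{L^2}$, whose $\ell^2(\mathbb{Z})$ norm is $\Vert u_0\Vert_{{\dot{H}}^\sigma}$. For the Duhamel term I would read the time integral as a convolution of $e^{-c2^{2j}t}\mathbf{1}_{\{t>0\}}$ with $\tau\mapsto\Vert {\dot{\Delta}}_j f(\tau)\Vert_{L^2}$ and apply Young's inequality with $1 + 1/q = 1/r + 1/p$. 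Then $\Vert e^{-c2^{2j}t}\Vert_{L^r(\mathbb{R}_+)} = C\,2^{-2j/r}$, and a direct check of the exponents, using $2/r = 2 - 2/p + 2/q$, shows that the surviving power of $2^j$ is exactly $2^{j(\sigma-2+2/p)}$, matching the regularity index of $f$. Summing in $\ell^2(\mathbb{Z})$ then yields the claimed bound with a constant independent of $T$, uniformly for $q\in[p,\infty]$.

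Uniqueness is immediate: by linearity the difference of two solutions solves the homogeneous problem with zero data, and the estimate forces it to vanish. For the continuity statement when $f=0$, I would observe that $t\mapsto 2^{j\sigma}\Vert {\dot{\Delta}}_j e^{t\Delta}u_0\Vert_{L^2}$ is continuous and dominated by the $\ell^2$-summable sequence $2^{j\sigma}\Vert {\dot{\Delta}}_j u_0\Vert_{L^2}$, so dominated convergence in $\ell^2(\mathbb{Z})$ gives $u\in C^0([0,T],{\dot{H}}^\sigma)$; the embedding $C^0([0,T],{\dot{H}}^\sigma)\inj L^1([0,T],{\dot{H}}^\sigma)$ is trivial on a bounded interval. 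The nonhomogeneous version follows verbatim after replacing $|\xi|$ by $\langle\xi\rangle$ in the multiplier estimate, the only change being that at low frequencies $e^{-c\langle 2^j\rangle^2 t}$ no longer decays in $j$, so the time norms must be kept on $(0,T)$, which is what makes the constant $T$-dependent.

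The main obstacle I anticipate is not conceptual but combinatorial: matching the Young exponents so that the powers of $2^j$ collapse onto the correct homogeneity $\sigma-2+2/p$ uniformly across the full range $q\in[p,\infty]$, including the endpoints $q=\infty$ (where $1/r = 1-1/p$) and $q=p=1$ (where $r=\infty$ and the decay factor is simply bounded by $1$). Once the localized decay estimate is granted, no further input is required.
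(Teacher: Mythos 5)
The paper offers no proof of Proposition~\ref{heat} at all: it is recalled from \cite{bcd}, p.~157, so the only meaningful comparison is with the standard argument behind that citation, which is exactly what you reproduce. Your proof is correct in its core: commuting $\dot{\Delta}_j$ with the heat flow, invoking the localized decay $\Vert e^{t\Delta}\dot{\Delta}_jg\Vert_{L^2}\leq Ce^{-c2^{2j}t}\Vert \dot{\Delta}_jg\Vert_{L^2}$, applying Young's inequality in time on each dyadic block \emph{before} the $\ell^2(\mathbb{Z})$ summation, and checking via $2/r=2-2/p+2/q$ that the powers of $2^j$ collapse onto $\sigma-2+2/p$ is precisely how the $T$-independent estimate is obtained; your explanation of why the nonhomogeneous case only yields $C=C_T$ (no exponential decay on the low-frequency block, so the time norms must be taken on $(0,T)$) is also the right one. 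Two small corrections are in order. First, in your endpoint discussion: for $q=p=1$ (indeed for any $q=p$) Young's relation forces $r=1$, not $r=\infty$, and it is the $L^1(\mathbb{R}_+)$ norm of $e^{-c2^{2j}t}$, of size $C2^{-2j}$, that supplies the two-derivative gain; the case $r=\infty$ with the kernel merely bounded by $1$ is the \emph{other} endpoint, $p=1$, $q=\infty$. Since the general exponent relation is what your verification actually uses, this mislabeling is harmless. Second, the uniqueness step deserves one more line: to apply the a priori estimate to the difference $v$ of two solutions, you must first know that $v$, a solution of the heat equation with zero data and zero force in the stated class, is indeed given by Duhamel's formula; equivalently, one notes that each $\dot{\Delta}_jv$ is an $L^2$-valued solution of the heat equation with zero data, hence vanishes by the elementary $L^2$ energy estimate. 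As written, ``the estimate forces it to vanish'' presupposes this reduction.
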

In the sequel, we denote the solution
$u$ of proposition $\ref{heat}$ by
$$
u(t) = e^{t\Delta}u_0
 +
 \int_0^t e^{(t-\tau)\Delta}
 f(\tau)d\tau
$$
We will prove an existence result
for problem (P) by combining proposition
$\ref{heat}$ with the following ${\dot{H}}^s$
estimate for the solution 
$S(u, V_0, V_1)$ of the wave equation 
(see \cite{bcd} pp. 360-361)
 \begin{align}
  \Vert 
  \nabla S(u, V_0, V_1)
  \Vert_{\tilde{L}_T^{\infty}({\dot{H}}^s)}
  \leq
  C(
   \Vert 
  \nabla V_0
  \Vert_{{\dot{H}}^s}
  +
   \Vert  
   V_1
  \Vert_{{\dot{H}}^s}
  + \Vert
  u
  \Vert_{\tilde{L}^1_{T}({\dot{H}}^s)}
  )\label{stri}
 \end{align}
\section{Existence and uniqueness in the 
case $n \geq 2$.}\label{exit}
 This part is devoted to the proof of existence of
 	a mild solution to problem (P).
 \begin{theorem}\label{pri}
  Let $n\geq 2$ and $s=n/2-1$. There exists
  $\eta>0$ such that, for any $T>0$, 
  $u_0\in {\dot{H}}^{s-2}$,
   $(\nabla V_0, V_1)\in {\dot{H}}^s
 \times {\dot{H}}^s$ with
 \begin{align}
 \Vert
  u_0
  \Vert_{{\dot{H}}^{s-2}}+
  \Vert 
  \nabla V_0
  \Vert_{{\dot{H}}^s}
  +
   \Vert  
   V_1
  \Vert_{{\dot{H}}^s} \leq 
  \eta\label{small}
 \end{align}
 there exists exactly one solution to the problem
 
 find $u\in \tilde{L}^1_T({\dot{H}}^s)$
 such that 
 $$
 u(t) = e^{t\Delta}u_0
 +
 \int_0^t e^{(t-\tau)\Delta}
 div\big(u\nabla S(u, V_0, V_1)\big)(\tau)d\tau
 $$
 When 
 the above assumptions are replaced by
 $u_0\in {\dot{H}}^{s}$ and 
 $\Vert
 \nabla V_0
  \Vert_{{\dot{H}}^s}
  +
   \Vert  
   V_1
  \Vert_{{\dot{H}}^s} \leq 
  \eta$ small enough, we get local in time 
  existence and
 uniqueness.
 \end{theorem}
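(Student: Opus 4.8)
The plan is to solve the fixed point equation by a Picard/contraction argument of Kato type in the Banach space $X_T := \tilde{L}^1_T({\dot{H}}^s)$. By linearity of the wave equation $\ref{eq2}$, $\ref{eq4}$ one splits $S(u,V_0,V_1) = S(u,0,0) + S(0,V_0,V_1)$, the first term being linear in $u$ and the second carrying the (fixed) data. Accordingly I rewrite the integral equation as $u = \Phi(u)$ with
\[
\Phi(u) = e^{t\Delta}u_0 + B(u,u) + L(u),
\]
where the bilinear and linear operators are
\[
B(u,v) = \int_0^t e^{(t-\tau)\Delta}\,div\big(u\,\nabla S(v,0,0)\big)(\tau)\,d\tau,\qquad
L(u) = \int_0^t e^{(t-\tau)\Delta}\,div\big(u\,\nabla S(0,V_0,V_1)\big)(\tau)\,d\tau.
\]
Everything reduces to showing that $B$ is bounded on $X_T\times X_T$, that $L$ has small operator norm, and that $\|e^{t\Delta}u_0\|_{X_T}$ is small, all with constants independent of $T$.

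The estimates proceed as follows. First, Proposition $\ref{heat}$ applied with $p=q=1$ and $\sigma = s-2$ gives, with a $T$-independent constant,
\[
\|e^{t\Delta}u_0\|_{X_T}\leq C\|u_0\|_{{\dot{H}}^{s-2}},\qquad
\Big\|\int_0^t e^{(t-\tau)\Delta}f\,d\tau\Big\|_{X_T}\leq C\|f\|_{\tilde{L}^1_T({\dot{H}}^{s-2})};
\]
this is exactly why the small-data hypothesis bears on $u_0 \in {\dot{H}}^{s-2}$. Since $div$ maps ${\dot{H}}^{s-1}$ into ${\dot{H}}^{s-2}$, it remains to control the product $u\,w$ in $\tilde{L}^1_T({\dot{H}}^{s-1})$ for $w \in \tilde{L}^\infty_T({\dot{H}}^s)$. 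A Bony paraproduct decomposition together with H\"older in time (combining $\tilde{L}^1_T$ and $\tilde{L}^\infty_T$ into $\tilde{L}^1_T$) yields
\[
\|u\,w\|_{\tilde{L}^1_T({\dot{H}}^{s-1})}\leq C\,\|u\|_{\tilde{L}^1_T({\dot{H}}^s)}\,\|w\|_{\tilde{L}^\infty_T({\dot{H}}^s)}.
\]
Finally the wave estimate $\ref{stri}$ (with data set to zero, resp. source set to zero) gives $\|\nabla S(v,0,0)\|_{\tilde{L}^\infty_T({\dot{H}}^s)}\leq C\|v\|_{X_T}$ and $\|\nabla S(0,V_0,V_1)\|_{\tilde{L}^\infty_T({\dot{H}}^s)}\leq C(\|\nabla V_0\|_{{\dot{H}}^s}+\|V_1\|_{{\dot{H}}^s})$. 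Chaining the three bounds produces $\|B(u,v)\|_{X_T}\leq C_B\|u\|_{X_T}\|v\|_{X_T}$ and, using $\ref{small}$, $\|L(u)\|_{X_T}\leq C\eta\,\|u\|_{X_T}$.

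With these estimates, $\Phi$ maps the closed ball $\bar{B}(0,R)\subset X_T$ into itself and is a contraction there as soon as $C\eta + C\eta R + C_B R^2 \leq R$ and $C\eta + 2C_B R < 1$; choosing $R$ of order $\eta$ and then $\eta$ small enough (independently of $T$) secures both. The Banach fixed point theorem then provides a unique solution in the ball. Because none of the constants depend on $T$, the threshold $\eta$ is uniform in $T$, which gives existence on every interval $[0,T]$, i.e. a global small-data result. Uniqueness in the whole of $X_T$ follows from the usual argument: the difference $d=u_1-u_2$ of two solutions obeys $d = L(d)+B(d,u_1)+B(u_2,d)$, hence $\|d\|_{X_{T'}}\leq (C\eta + C_B(\|u_1\|_{X_{T'}}+\|u_2\|_{X_{T'}}))\|d\|_{X_{T'}}$; since the $\tilde{L}^1_{T'}$ norm of any element of $X_T$ tends to $0$ as $T'\to 0$, this forces $d=0$ first on a small interval and then on all of $[0,T]$ by iteration.

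For the local statement one keeps $(\nabla V_0,V_1)$ small but only assumes $u_0 \in {\dot{H}}^s$. The role of the smallness of $\|e^{t\Delta}u_0\|_{X_T}$ is now played by the length of the time interval: using the Littlewood--Paley description of the norm and $\int_0^T e^{-c\tau 2^{2j}}d\tau \to 0$ as $T\to 0$ for each fixed $j$, dominated convergence in $\ell^2(\mathbb{Z})$ gives $\|e^{t\Delta}u_0\|_{\tilde{L}^1_T({\dot{H}}^s)}\to 0$ as $T\to 0$. Hence for $T$ small enough (depending on $u_0$) this quantity falls below the threshold required by the contraction, and the same fixed point argument applies on $[0,T]$. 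The \textbf{main obstacle} is the product estimate: it rests on the homogeneous product law ${\dot{H}}^s\cdot{\dot{H}}^s\hookrightarrow{\dot{H}}^{s-1}$ (equivalently $2s-n/2 = s-1$), whose remainder term needs $s_1+s_2>0$. This is automatic for $n>2$ but degenerates to the borderline $s_1+s_2=0$ when $n=2$ (so $s=0$); there one must argue separately, for instance placing the remainder in $\dot{B}^0_{1,1}$ and invoking the Bernstein embedding $\dot{B}^0_{1,1}\hookrightarrow{\dot{H}}^{-1}$, and it is this endpoint bookkeeping in Chemin--Lerner spaces that carries the real work.
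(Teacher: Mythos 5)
Your argument is, in substance, the paper's own proof: the same splitting $S(u,V_0,V_1)=S(u,0,0)+S(0,V_0,V_1)$ producing exactly the bilinear map ($\ref{K1}$) and linear map ($\ref{K2}$); the same three $T$-independent bounds (the paper's Lemma $\ref{est}$) obtained by chaining Proposition $\ref{heat}$, the homogeneous product law, and the wave estimate ($\ref{stri}$); the same small-data fixed point (which the paper packages as Lemma $\ref{bil}$ rather than re-running the contraction by hand); and the same mechanism $\lim_{t\to 0}\Vert e^{\tau\Delta}u_0\Vert_{E_t}=0$ for the local statement (you prove it by dominated convergence on the dyadic blocks, the paper via the $L^1([0,t],\dot H^s)$ bound from Proposition $\ref{heat}$ -- equivalent).

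The one genuine divergence is your closing paragraph on $n=2$, and there the comparison cuts both ways. You are right to flag it, and you are in fact more careful than the paper: the product law $\dot H^{s_1}\cdot\dot H^{s_2}\hookrightarrow\dot H^{s_1+s_2-n/2}$ requires $s_1+s_2>0$, a hypothesis the paper's justification (``since $-n/2<s<n/2$'') silently omits, and which fails precisely when $n=2$, $s=0$. However, your proposed repair does not work. The remainder of the product of two $L^2(\mathbb{R}^2)$ functions lies only in $\dot B^0_{1,\infty}$, not in $\dot B^0_{1,1}$: one has $\Vert\dot\Delta_k R(u,w)\Vert_{L^1}\leq\sum_{j\geq k-N_0}\Vert\dot\Delta_j u\Vert_{L^2}\Vert\tilde{\dot\Delta}_j w\Vert_{L^2}$, which is bounded uniformly in $k$ by Cauchy--Schwarz but is not summable over $k$. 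And no paraproduct bookkeeping can rescue the estimate, because it is false: take $u=w=e^{-\vert x\vert^2/2}\in L^2(\mathbb{R}^2)$; then $uw\in L^1$ has non-vanishing Fourier transform at $\xi=0$, while $\vert\xi\vert^{-2}$ is not integrable near the origin in dimension $2$, so $uw\notin\dot H^{-1}(\mathbb{R}^2)$. The same example, taken constant in time, defeats the Chemin--Lerner version $\tilde L^1_T(L^2)\times\tilde L^\infty_T(L^2)\to\tilde L^1_T(\dot H^{-1})$. Consequently, for $n=2$ both your sketch and the paper's Lemma $\ref{est}$ are incomplete as written; any fix must exploit more about $\nabla S(w,0,0)$ than its membership in $\tilde L^\infty_T(\dot H^0)$ (for instance extra Strichartz integrability for the two-dimensional wave equation), not finer handling of the remainder. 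For $n\geq 3$ your proof is complete and coincides with the paper's.
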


We will use the following classical lemma (see for instance \cite{bcd}
p.357). In this lemma, $\bar{B}(0, r)\subset E$ denotes the closed ball
of center $0$ and radius $r>0$.

\begin{lemma}\label{bil}
Let $E$ be a Banach space. Let
$\mathscr{B}: E\times E \rightarrow E$ be a continuous 
bilinear map and $\mathscr{L}: E \rightarrow E$
be a linear continuous map with $\Vert\mathscr{L}\Vert
<1$. Let $0<\alpha< (1-\Vert \mathscr{L}\Vert)^2/(4\Vert\mathscr{B}\Vert)$.
Then, for any $\gamma\in \bar{B}(0, \alpha)$, there exists 
exactly one $x\in\bar{B}(0, 2\alpha)$ such that 
$x = \gamma + \mathscr{L}(x)+\mathscr{B}(x,x)$.
\end{lemma}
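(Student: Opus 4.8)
The statement is the classical Picard--Banach fixed point result for a quadratic equation carrying a small linear perturbation, so the plan is to exhibit $x$ as the unique fixed point of $\Phi(x):=\gamma+\mathscr{L}(x)+\mathscr{B}(x,x)$ and to invoke the contraction mapping principle on a closed ball of $E$, which is complete. Concretely I would show that an appropriate conjugate of $\Phi$ maps a closed ball into itself and is a strict contraction there, and then check that the resulting solution and the uniqueness assertion are compatible with the ball $\bar{B}(0,2\alpha)$ named in the conclusion.

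The first step is to absorb the linear term. Since $\Vert\mathscr{L}\Vert<1$, the Neumann series shows that $I-\mathscr{L}$ is invertible with $\Vert(I-\mathscr{L})^{-1}\Vert\leq(1-\Vert\mathscr{L}\Vert)^{-1}$. Applying $(I-\mathscr{L})^{-1}$, the equation $x=\gamma+\mathscr{L}(x)+\mathscr{B}(x,x)$ becomes the purely quadratic equation $x=\tilde{\gamma}+\tilde{\mathscr{B}}(x,x)$, with $\tilde{\gamma}:=(I-\mathscr{L})^{-1}\gamma$ and $\tilde{\mathscr{B}}:=(I-\mathscr{L})^{-1}\circ\mathscr{B}$. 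The norms satisfy $\Vert\tilde{\gamma}\Vert\leq\alpha/(1-\Vert\mathscr{L}\Vert)$ and $\Vert\tilde{\mathscr{B}}\Vert\leq\Vert\mathscr{B}\Vert/(1-\Vert\mathscr{L}\Vert)$, so the hypothesis $\alpha<(1-\Vert\mathscr{L}\Vert)^{2}/(4\Vert\mathscr{B}\Vert)$ is exactly the smallness condition $4\,\Vert\tilde{\mathscr{B}}\Vert\,\Vert\tilde{\gamma}\Vert<1$. This conjugation is precisely what accounts for the square on $1-\Vert\mathscr{L}\Vert$ in the assumption.

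The second step is the standard quadratic fixed point argument applied to $\Psi(x):=\tilde{\gamma}+\tilde{\mathscr{B}}(x,x)$. For $\Vert x\Vert\leq R$ one has $\Vert\Psi(x)\Vert\leq\Vert\tilde{\gamma}\Vert+\Vert\tilde{\mathscr{B}}\Vert R^{2}$, so $\Psi$ stabilises $\bar{B}(0,R)$ whenever $\Vert\tilde{\mathscr{B}}\Vert R^{2}-R+\Vert\tilde{\gamma}\Vert\leq0$; and for $\Vert x\Vert,\Vert y\Vert\leq R$ bilinearity gives $\Vert\Psi(x)-\Psi(y)\Vert\leq 2\Vert\tilde{\mathscr{B}}\Vert R\,\Vert x-y\Vert$, a contraction as soon as $2\Vert\tilde{\mathscr{B}}\Vert R<1$. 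Both conditions hold at the smaller root $R_{-}=(1-\sqrt{1-4\Vert\tilde{\mathscr{B}}\Vert\Vert\tilde{\gamma}\Vert})/(2\Vert\tilde{\mathscr{B}}\Vert)$, which moreover satisfies $R_{-}\leq 2\Vert\tilde{\gamma}\Vert$, so Banach's theorem yields a unique fixed point in $\bar{B}(0,R_{-})$. For uniqueness within the ball of the statement I argue directly: if $x,y\in\bar{B}(0,2\alpha)$ both solve the equation, then $x-y=\mathscr{L}(x-y)+\mathscr{B}(x,x-y)+\mathscr{B}(x-y,y)$, hence $\Vert x-y\Vert\leq(\Vert\mathscr{L}\Vert+4\alpha\Vert\mathscr{B}\Vert)\Vert x-y\Vert$, and $\Vert\mathscr{L}\Vert+4\alpha\Vert\mathscr{B}\Vert<1$ because $4\alpha\Vert\mathscr{B}\Vert<(1-\Vert\mathscr{L}\Vert)^{2}\leq 1-\Vert\mathscr{L}\Vert$; therefore $x=y$.

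The step I expect to be the main obstacle is the bookkeeping of radii, that is, reconciling the ball on which the contraction argument runs with the ball $\bar{B}(0,2\alpha)$ claimed in the conclusion. The linear term is the source of this difficulty: passing through $(I-\mathscr{L})^{-1}$ dilates the natural radius by the factor $(1-\Vert\mathscr{L}\Vert)^{-1}$, so the fixed point is first located in $\bar{B}(0,2\Vert\tilde{\gamma}\Vert)\subseteq\bar{B}\big(0,2\alpha/(1-\Vert\mathscr{L}\Vert)\big)$, and one must track the estimates carefully, using the sharp form of the smallness assumption, to place it in $\bar{B}(0,2\alpha)$. Once the radius is pinned down, the remaining verifications (the self-map inequality, the contraction constant, and completeness of the ball) are entirely routine.
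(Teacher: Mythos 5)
The paper never proves this lemma: it is invoked as classical with a pointer to \cite{bcd}, p.~357, so your proposal can only be measured against the statement itself and the cited source. As far as it goes, your argument is sound: the Neumann-series inversion of $I-\mathscr{L}$, the contraction argument at the smaller root $R_-$, the bound $R_-\leq 2\Vert\tilde{\gamma}\Vert\leq 2\alpha/(1-\Vert\mathscr{L}\Vert)$, and the direct uniqueness argument in $\bar{B}(0,2\alpha)$ (valid because $4\alpha\Vert\mathscr{B}\Vert<(1-\Vert\mathscr{L}\Vert)^2\leq 1-\Vert\mathscr{L}\Vert$) are all correct.

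However, the step you set aside as ``bookkeeping of radii'' --- moving the fixed point from $\bar{B}\big(0,2\alpha/(1-\Vert\mathscr{L}\Vert)\big)$ into $\bar{B}(0,2\alpha)$ --- is not bookkeeping: it is impossible, because the existence half of the statement as printed is false. Take $E=\mathbb{R}$, $\mathscr{L}(x)=x/2$, $\mathscr{B}(x,y)=xy$, $\alpha=\gamma=1/20$; then $\Vert\mathscr{L}\Vert=1/2<1$, $\Vert\mathscr{B}\Vert=1$ and $\alpha<(1-\Vert\mathscr{L}\Vert)^2/(4\Vert\mathscr{B}\Vert)=1/16$, yet the equation $x=\gamma+\mathscr{L}(x)+\mathscr{B}(x,x)$, i.e.\ $x^2-x/2+1/20=0$, has exactly two solutions $x=(5\pm\sqrt{5})/20$, both larger than $2\alpha=1/10$, so $\bar{B}(0,2\alpha)$ contains no solution at all. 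In general your scheme locates the small solution at $\rho_-=2\Vert\gamma\Vert\big/\big[(1-\Vert\mathscr{L}\Vert)+\sqrt{(1-\Vert\mathscr{L}\Vert)^2-4\Vert\mathscr{B}\Vert\,\Vert\gamma\Vert}\,\big]$, and $\rho_-\leq 2\alpha$ requires the stronger condition $4\Vert\mathscr{B}\Vert\alpha\leq 1-2\Vert\mathscr{L}\Vert$, not the stated one; the two conditions coincide only when $\mathscr{L}=0$. What your argument actually proves is the corrected lemma: exactly one solution in $\bar{B}\big(0,2\alpha/(1-\Vert\mathscr{L}\Vert)\big)$, a ball on which uniqueness still holds since $\Vert\mathscr{L}\Vert+4\Vert\mathscr{B}\Vert\alpha/(1-\Vert\mathscr{L}\Vert)<1$; this matches the cited reference, where the conclusion is phrased in the ball of radius $(1-\Vert\mathscr{L}\Vert)/(2\Vert\mathscr{B}\Vert)$ rather than $2\alpha$. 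So the defect lies in the paper's transcription of the lemma, not in your strategy; and the corrected version is all that the applications in sections \ref{exit} and \ref{prelim} require, since they only use the lemma to produce one solution and to guarantee uniqueness in some ball containing it.
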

In order to use lemma $\ref{bil}$, for $T>0$ and 
$(\nabla V_0, V_1)\in {\dot{H}}^s
 \times {\dot{H}}^s$  given, set 
 $E_T = \tilde{L}^1_T({\dot{H}}^s)$
and define $\mathscr{B}_T: E_T \times E_T \rightarrow E_T$
by 
\begin{align}
\mathscr{B}_T(u, w)= 
 \int_0^t e^{(t-\tau)\Delta}
 div\big(u\nabla S(w, 0, 0)\big)(\tau)d\tau\label{K1}
 \end{align}
 We also define $\mathscr{L}_T: E_T \rightarrow E_T$ by 
\begin{align}
\mathscr{L}_T(u)=
 \int_0^t e^{(t-\tau)\Delta}
 div\big(u\nabla S(0, V_0, V_1)\big)(\tau)d\tau\label{K2}
 \end{align}
Theorem
$\ref{pri}$ is an immediate consequence of lemma 
$\ref{bil}$ and the following $T$-independent estimates. 

\begin{lemma}\label{est}
   Let $n\geq 2$ and $s=n/2-1$, 
  $u_0\in {\dot{H}}^{s-2}$,
   $(\nabla V_0, V_1)\in {\dot{H}}^s
 \times {\dot{H}}^s$. Then, there exists
 $C_i>0$ $(0\leq i\leq 2)$ such that, for any $T>0$ and any 
 $u\in E_T$, $w\in E_T$, we have
 \begin{align}
  &\Vert \mathscr{B}_T(u, w)\Vert_{E_T}
  \leq 
  C_0 \Vert u \Vert_{E_T}
  \Vert w\Vert_{E_T}\label{beq1}\\
   &\Vert \mathscr{L}_T(u)\Vert_{E_T}
  \leq 
  C_1 
   \big( 
  \Vert 
  \nabla V_0
  \Vert_{{\dot{H}}^s}
  +
   \Vert  
   V_1
  \Vert_{{\dot{H}}^s}\big)
  \Vert u \Vert_{E_T}
 \label{beq2}\\
  &\Vert
  e^{t\Delta}u_0
  \Vert_{E_T}
  \leq C_2
  \Vert
  u_0
  \Vert_{{\dot{H}}^{s-2}}\label{beq3}
 \end{align}
\end{lemma}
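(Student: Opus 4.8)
The estimate \eqref{beq3} is the most direct, so I would dispose of it first: applying Proposition \ref{heat} with $f=0$, $\sigma=s-2$ and $p=q=1$ places $e^{t\Delta}u_0$ in $\tilde{L}^1_T(\dot{H}^{s-2+2})=\tilde{L}^1_T(\dot{H}^s)=E_T$ and yields the $T$-independent bound $\|e^{t\Delta}u_0\|_{E_T}\le C\|u_0\|_{\dot{H}^{s-2}}$, which is exactly \eqref{beq3}.

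For \eqref{beq1} and \eqref{beq2} I would run a single scheme, since $\mathscr{B}_T$ and $\mathscr{L}_T$ differ only in whether the potential is generated by $w$ or by the data $(V_0,V_1)$: in both cases the integrand is $e^{(t-\tau)\Delta}div(u\nabla V)$ with $V=S(w,0,0)$, respectively $V=S(0,V_0,V_1)$. The plan is to apply Proposition \ref{heat} to the forcing $f=div(u\nabla V)$, in four steps. First, establish the product estimate $\|u\nabla V\|_{\tilde{L}^1_T(\dot{H}^{s-1})}\le C\|u\|_{\tilde{L}^1_T(\dot{H}^s)}\|\nabla V\|_{\tilde{L}^\infty_T(\dot{H}^s)}$. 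Second, observe that $div$ maps $\dot{H}^{s-1}$ into $\dot{H}^{s-2}$, so $f\in\tilde{L}^1_T(\dot{H}^{s-2})$. Third, invoke Proposition \ref{heat} with $p=q=1$ and $\sigma=s-2$, which returns the solution to $\tilde{L}^1_T(\dot{H}^s)=E_T$ with a constant independent of $T$. Fourth, bound $\|\nabla V\|_{\tilde{L}^\infty_T(\dot{H}^s)}$ through the wave estimate \eqref{stri}, which gives $C\|w\|_{E_T}$ in the bilinear case and $C(\|\nabla V_0\|_{\dot{H}^s}+\|V_1\|_{\dot{H}^s})$ in the linear case. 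Reading off this chain yields \eqref{beq1} and \eqref{beq2}.

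The analytic heart is the first step, and the dimensional bookkeeping is forced. With $s=n/2-1$ both $u(\tau)$ and $\nabla V(\tau)$ lie in $\dot{H}^s$, and the homogeneous product law $\dot{H}^s\cdot\dot{H}^s\hookrightarrow\dot{H}^{2s-n/2}=\dot{H}^{s-1}$ (see \cite{bcd}) sits at precisely the right scaling. I would prove its time-dependent version by running the paraproduct decomposition $uv=T_uv+T_vu+R(u,v)$ block by block and combining it with H\"older in time inside the Chemin-Lerner norm, pairing $u\in\tilde{L}^1_T(\dot{H}^s)$ with $\nabla V\in\tilde{L}^\infty_T(\dot{H}^s)$ via $1=\tfrac11+\tfrac1\infty$. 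Working in homogeneous spaces throughout is what keeps every constant independent of $T$.

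The \emph{main obstacle} is the endpoint dimension $n=2$, where $s=0$ and the product law $\dot{H}^0\cdot\dot{H}^0\hookrightarrow\dot{H}^{-1}$ fails: the remainder $R(u,v)$ lands in $\dot{H}^{s_1+s_2}$ only when $s_1+s_2>0$, whereas here $s_1+s_2=2s=0$, and indeed $L^2\cdot L^2\subset L^1\not\hookrightarrow\dot{H}^{-1}(\mathbb{R}^2)$. I expect this case to demand a more delicate argument than the plain product law, exploiting the $\ell^2$-summation and the interplay between spatial regularity and time integrability encoded in the Chemin-Lerner norm, or a refined treatment of the remainder term. Accordingly I would isolate $n=2$ and treat it separately, reserving the clean product-law argument above for $n\ge 3$.
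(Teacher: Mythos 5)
For $n\geq 3$ your proposal is, step for step, the paper's own proof. The paper treats \ref{beq1} and \ref{beq2} simultaneously by setting $z=\mathscr{B}_T(u,w)+\mathscr{L}_T(u)=\int_0^t e^{(t-\tau)\Delta}div\big(u\nabla S(w,V_0,V_1)\big)(\tau)d\tau$, then chains exactly your four steps: proposition \ref{heat} with $p=q=1$, $\sigma=s-2$ (which also gives \ref{beq3} with $f=0$), the gain of one derivative from $div$, the product law $\tilde{L}^1_T(\dot{H}^s)\times\tilde{L}^{\infty}_T(\dot{H}^s)\rightarrow\tilde{L}^1_T(\dot{H}^{2s-n/2})=\tilde{L}^1_T(\dot{H}^{s-1})$, and finally the wave estimate \ref{stri}.

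The obstacle you flag at $n=2$ is genuine, and the important point is that the paper does \emph{not} resolve it: its proof invokes the product law under the condition $-n/2<s<n/2$, but that is the hypothesis of a different statement (multiplication by a function of $\dot{H}^{n/2}\cap L^{\infty}$, cf. \cite{bcd}); the law actually used, $\dot{H}^{s}\cdot\dot{H}^{s}\hookrightarrow\dot{H}^{2s-n/2}$, requires $s<n/2$ \emph{and} $2s>0$, and the latter fails precisely at $n=2$, $s=0$, for the reason you give ($L^2\cdot L^2\subset L^1\not\hookrightarrow\dot{H}^{-1}(\mathbb{R}^2)$). Your further intuition that the Chemin--Lerner time exponents might rescue the remainder term does not pan out either: with $c_j=\Vert{\dot{\Delta}}_j u\Vert_{L^1_T(L^2)}$ and $d_j=\Vert{\dot{\Delta}}_j\nabla S\Vert_{L^{\infty}_T(L^2)}$, both merely in $\ell^2$, the two paraproducts do land in $\tilde{L}^1_T(\dot{B}^{-1}_{2,1})$, but for the remainder $R$ one only gets $2^{-k}\Vert{\dot{\Delta}}_kR(u,\nabla S)\Vert_{L^1_T(L^2)}\lesssim\sum_{j\geq k-N}c_jd_j$, and this bound tends to $\Vert(c_jd_j)_j\Vert_{\ell^1}>0$ as $k\rightarrow-\infty$, hence is not square-summable in $k$. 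So your decision to isolate $n=2$ is sound, but the ``more delicate argument'' you defer to is not available in the paper: as written, lemma \ref{est} (and hence theorem \ref{pri}) is only established for $n\geq3$, and closing the case $n=2$ would require genuinely new input (e.g.\ extra integrability or decay for $\nabla S$ from Strichartz-type estimates, or a framework of $\dot{B}^0_{2,1}$ type), since even the truth of \ref{beq1} at $n=2$ is unclear.
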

\begin{proof}
 Inequality $\ref{beq3}$ follows from proposition
 $\ref{heat}$.
 Inequalities $\ref{beq1}$ and $\ref{beq2}$, amounts to
 $$\Vert
 \mathscr{B}_T(u, w)+
 \mathscr{L}_T(u)
 \Vert_{\tilde{L}_T^1({\dot{H}}^s)}
 \leq C
  \Vert u
 \Vert_{\tilde{L}_T^1({\dot{H}}^s)}
   \big(  \Vert w
 \Vert_{L_T^1({\dot{H}}^s)}+
  \Vert 
  \nabla V_0
  \Vert_{{\dot{H}}^s}
  +
   \Vert  
   V_1
  \Vert_{{\dot{H}}^s}\big)
 $$
 Set $$z =  \mathscr{B}_T(u, w)+
 \mathscr{L}_T(u)=  \int_0^t e^{(t-\tau)\Delta}
 div\big(u\nabla S(w, V_0, V_1)\big)(\tau)d\tau
 $$
Proposition $\ref{heat}$ provides
 $$\Vert z
  \Vert_{\tilde{L}_T^1({\dot{H}}^s)}
  \leq C
  \Vert div(u\nabla S(w, V_0, V_1))
  \Vert_{\tilde{L}_T^1({\dot{H}}^{s-2})}
 $$
 hence 
 \begin{align}\label{pok}
  \Vert z
  \Vert_{\tilde{L}_T^1({\dot{H}}^s)}
  \leq C
  \Vert u\nabla S(w, V_0, V_1)
  \Vert_{\tilde{L}_T^1({\dot{H}}^{s-1})}
 \end{align}
Since $-n/2<s<n/2$, the product is continuous from 
$\tilde{L}_T^1({\dot{H}}^s)
\times \tilde{L}_T^{\infty}({\dot{H}}^s)$ to 
$\tilde{L}_T^1({\dot{H}}^{2s-n/2}) = 
\tilde{L}_T^1({\dot{H}}^{s-1})$.
See for instance \cite{bcd} pages 90 and 98 or use Bony's decomposition.
With $\ref{pok}$, this implies that
$$
 \Vert z
  \Vert_{\tilde{L}_T^1({\dot{H}}^s)}
  \leq C
  \Vert u\Vert _{\tilde{L}_T^1({\dot{H}}^s)}
  \Vert \nabla S(w, V_0, V_1)
  \Vert_{\tilde{L}_T^{\infty}({\dot{H}}^s)}
$$
and with $\ref{stri}$
 $$\Vert
 z
 \Vert_{\tilde{L}_T^1({\dot{H}}^s)}
 \leq C
  \Vert u
 \Vert_{\tilde{L}_T^1({\dot{H}}^s)}
   \big(  \Vert w
 \Vert_{\tilde{L}_T^1({\dot{H}}^s)}+
  \Vert 
  \nabla V_0
  \Vert_{{\dot{H}}^s}
  +
   \Vert  
   V_1
  \Vert_{{\dot{H}}^s}\big)
 $$
\end{proof}
 
Global existence and uniqueness
in theorem $\ref{est}$ is a consequence
of lemmas $\ref{bil}$ and $\ref{est}$ by 
restricting to small data, i.e
$\Vert 
  \nabla V_0
  \Vert_{{\dot{H}}^s}
  +
   \Vert  
   V_1
  \Vert_{{\dot{H}}^s}<1/C_1$ and 
  $\Vert
  u_0
  \Vert_{{\dot{H}}^{s-2}}<
  [1-C_1
  (\Vert 
  \nabla V_0
  \Vert_{{\dot{H}}^s}
  +
   \Vert  
   V_1
  \Vert_{{\dot{H}}^s})]^2/(4C_0C_2)
  $.
The local existence and uniqueness part
is a consequence of the same lemmas once 
$\lim_{t\rightarrow 0}\Vert e^{\tau\Delta}u_0\Vert_{E_t}
=0$ is proved. Recalling the assumption 
$u_0\in {\dot{H}}^{s}$, this follows from the fact that
$e^{\tau\Delta}u_0\in L^1([0, t], {\dot{H}}^s
(\mathbb{R}))$ (see proposition $\ref{heat}$) and the inequality
(see \cite{bcd} p.98)
$\Vert e^{\tau\Delta}u_0\Vert_{E_t}\leq
\Vert e^{\tau\Delta}u_0\Vert_{L^1([0, t], {\dot{H}}^s
(\mathbb{R}))}
\rightarrow 0$ when $t\rightarrow 0
$.

 \begin{remark}
 The proof of theorem
 $\ref{pri}$ extends
 to the Debye type system (see \cite{bwn},
  \cite{kn}): $\partial_t u_j -\Delta u_j = div(\beta_j u_j\nabla V)$,
 $\partial_{tt} V -\Delta V = \sum_{k}\alpha_ku_k$, 
  $u_j(0)= u_{j,0}$, $V(0)= V_0, V_t(0)= V_1$ with 
  $(\alpha_j, \beta_j)
 \in\mathbb{R}^2$ $(1\leq j\leq m)$ given.
  \end{remark}
\section{Existence and uniqueness in the case $n = 1$.}\label{prelim}
Until the end of the paper, $n=1$.  
We still denote by 
 $S(u, V_0, V_1)\in C^0\big(0, T, \mathscr{S}'(\mathbb{R}^n)\big)$
 the unique solution of the wave equation $\ref{eq2}$, $\ref{eq4}$, and
$\mathscr{B}_T(u, w)$ and $\mathscr{L}_T(u)$ are still
formally defined by formulas 
$\ref{K1}$ and $\ref{K2}$.
The notation $L^p_x$ stands for
$L^p(\mathbb{R}_x)$. Last, $\nabla = div =\partial_x$.

As a building block in the proof 
of the existence theorem $\ref{n1}$
below, we first establish a $L^1$
estimate for solutions of equations
$\ref{eq5}$, $\ref{eq6}$ (lemma $\ref{L1}$). 
We begin with two simple trace-lemmas. For 
$y\in \mathbb{R}$, set $D_T(y) = ]0, T[
\times ]y, y+1[$ and $\bar{D}_T(y) = [0, T]
\times [y, y+1]$.
\begin{lemma}\label{TSSA}
 Let $T>0$, $y\in\mathbb{R}$. There exists
 $C > 0$ such that, for any $y\in \mathbb{R}$
 and $f\in 
  C^1(\bar{D}_T(y))$, we have 
 \begin{align}\label{VGE}
 \Vert f(., y)\Vert_{L^2(0, T)}
 +
  \Vert f(., y+1)\Vert_{L^2(0, T)}
  \leq
  C \Vert f \Vert_{L^2(0, T, H^1(]y, y+1[))}
 \end{align}
 \end{lemma}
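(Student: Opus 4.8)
The plan is to prove a one-dimensional trace inequality, bounding the $L^2(0,T)$ norm of the boundary values $f(\cdot,y)$ and $f(\cdot,y+1)$ by the $L^2(0,T,H^1(]y,y+1[))$ norm of $f$. The key observation is that the estimate is uniform in $y$, which strongly suggests we should exploit translation invariance: by replacing $f$ with $f(\cdot,\cdot+y)$, it suffices to prove the inequality on the fixed reference cell with $y=0$, i.e.\ for $f\in C^1([0,T]\times[0,1])$ we must show
\[
\Vert f(\cdot,0)\Vert_{L^2(0,T)}+\Vert f(\cdot,1)\Vert_{L^2(0,T)}\leq C\,\Vert f\Vert_{L^2(0,T,H^1(]0,1[))}.
\]
This reduces the problem to the standard one-dimensional trace theorem on the unit interval, with time playing the role of a passive parameter.

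The heart of the argument is the pointwise-in-time trace bound. For each fixed $t$, I would write, using the fundamental theorem of calculus, for any $x\in]0,1[$,
\[
f(t,0)=f(t,x)-\int_0^x \partial_x f(t,\xi)\,d\xi.
\]
Taking absolute values, applying $(a+b)^2\leq 2a^2+2b^2$, and then using Cauchy--Schwarz on the integral term (so that $\bigl(\int_0^x\partial_x f\,d\xi\bigr)^2\leq\int_0^1|\partial_x f(t,\xi)|^2\,d\xi$), one gets
\[
|f(t,0)|^2\leq 2|f(t,x)|^2+2\int_0^1|\partial_x f(t,\xi)|^2\,d\xi.
\]
Integrating this inequality in $x$ over $]0,1[$ (the left side is constant in $x$) yields the clean bound $|f(t,0)|^2\leq 2\int_0^1|f(t,x)|^2\,dx+2\int_0^1|\partial_x f(t,x)|^2\,dx$, which is exactly $2\Vert f(t,\cdot)\Vert_{H^1(]0,1[)}^2$. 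The identical argument starting from $f(t,1)=f(t,x)+\int_x^1\partial_x f(t,\xi)\,d\xi$ handles the other trace.

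Finally, I would integrate the pointwise bound over $t\in]0,T[$. This gives $\Vert f(\cdot,0)\Vert_{L^2(0,T)}^2\leq 2\Vert f\Vert_{L^2(0,T,H^1(]0,1[))}^2$, and likewise for the $x=1$ trace; summing the two and taking square roots (absorbing the numerical factors into $C$) produces the claimed estimate with a constant $C$ independent of $y$. I do not expect any serious obstacle here: the result is a soft, classical trace estimate, and the only point requiring a little care is confirming that the constant genuinely does not depend on $y$ — which the translation-invariance reduction in the first step makes transparent, since after translating to the reference cell no $y$-dependence remains anywhere in the computation.
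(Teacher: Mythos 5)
Your proof is correct, and it takes a route that differs in its mechanics from the paper's, although both are elementary fundamental-theorem-of-calculus arguments with time as a passive parameter. You first translate to the reference cell $y=0$, then represent the trace via $f(t,0)=f(t,x)-\int_0^x\partial_x f(t,\xi)\,d\xi$, and average over the interior point $x\in{]0,1[}$ after Cauchy--Schwarz; the $y$-uniformity of the constant is automatic from the translation reduction, and you get an explicit constant (each trace is bounded by $\sqrt{2}\,\Vert f\Vert_{L^2(0,T,H^1)}$). The paper instead fixes a cutoff $\phi\in C^1(\bar{D}_T(0))$ equal to $1$ near $x=0$ and vanishing near $x=1$, sets $\phi^y(t,x)=\phi(t,x-y)$, and writes $|f(\tau,y)|^2=|\phi^y f|^2(\tau,y)=-\int_y^{y+1}\partial_s\bigl(|\phi^y f|^2\bigr)(\tau,s)\,ds\leq\int_y^{y+1}2\,|\phi^y f\,(\phi^y f)_x|\,ds$, so each endpoint is handled by a one-sided cutoff and the uniformity in $y$ comes from $\Vert\phi^y\Vert_{W^{1,\infty}}=\Vert\phi\Vert_{W^{1,\infty}}$. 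What each approach buys: yours is slightly more self-contained and makes the uniform constant completely transparent, with no auxiliary function to construct; the paper's cutoff device localizes near a single endpoint, so it treats each trace independently of the behavior at the other end (only one inequality needs to be written, the other being symmetric) and is the pattern that adapts directly when one wants traces of products like $\phi^y f$ in the later lemmas. Both arguments are complete proofs of the stated estimate.
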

 \begin{proof}
  We only prove inequality
   \begin{align}\label{KER}
    \int_0^T\vert f(\tau, y)\vert^2d\tau
  \leq C \Vert f \Vert_{L^2(0, T, H^1(]y, y+1[))}^2
   \end{align}
   Let $\phi \in C^1(\bar{D}_T(0))$ with
   $\phi(t, x) = 1$ for $(t, x)\in [0, T]\times[0, 1/4]$
   and 
    $\phi(t, x) = 0$ for $(t, x)\in [0, T]\times[3/4, 1]$.
   For $y\in\mathbb{R}$ fixed, 
    define $\phi^y\in C^1(\bar{D}_T(y))$
    by $\phi^y(t, x)=\phi(t, x-y)$.
    Let $f\in C^1(\bar{D}_T(y))$. We have
 \begin{align}
  \int_0^T\vert f\vert^2(\tau, y) d\tau
  =\int_0^T\vert \phi^yf\vert^2(\tau, y) d\tau
  &\leq \int_0^T
  \int_y^{y+1}2
  \vert \phi^yf(\phi^yf)_x\vert(\tau, s)ds
  d\tau\nonumber\\
  &\leq 2
  \Vert \phi^y \Vert_{W^{1, \infty}(D(y))}^2
  \Vert
  f
  \Vert^2_{L^2(0, T, H^1(]y, y+1[))}
  \label{PL1}
 \end{align}
 Since $\Vert \phi^y \Vert_{W^{1, \infty}(D(y))}
 =\Vert \phi \Vert_{W^{1, \infty}(D(0))}$, we get inequality
 $\ref{KER}$.
 \end{proof}
 Let $y\in\mathbb{R}$. By lemma 
 $\ref{TSSA}$, we can define
two continuous trace-operators
 $\gamma_{y+1}^{-}$ and  
 $\gamma_{y}^{+}:L^2(0, T, H^1(\mathbb{R}))\rightarrow
 L^2(0, T)$ by 
 $
\gamma_y^+(f)(\tau) = f(\tau, y) 
\textrm{ and } \gamma_{y+1}^{-}(f)(\tau) = f(\tau, y+1)$
for any 
$f\in C^1(\bar{D}_T(y))$. For future reference,
notice that,  
 for any
$f\in L^2(]0, T[, H^1(]y, y+1[))$, we have 
 \begin{align}\label{VGF} 
 &\Vert \gamma_{y+1}^{-}(f)\Vert_{L^1(0, T)}
  +
  \Vert\gamma_y^+(f)\Vert_{L^1(0, T)}\nonumber\\
  &\leq \sqrt{T}(\Vert \gamma_{y+1}^{-}(f)\Vert_{L^2(0, T)}
  +
  \Vert\gamma_y^+(f)\Vert_{L^2(0, T)})
  \leq C \sqrt{T}\Vert f \Vert_{L^2(0, T, H^1(]y, y+1[))}
 \end{align}
 with a constant $C>0$ independant of $y\in \mathbb{R}$.
  
 The second lemma is a consequence
 of the continuity of the trace functions
 $\gamma_y^{\pm}$ and density arguments.
 We only prove inequality $\ref{GREE2}$.
 In the sequel $sign$ denotes the sign function.
 \begin{lemma}\label{CSA}
  Let $T>0$ and $y\in\mathbb{R}$. Let also
  $f\in L^2(0, T, H^2(\mathbb{R}))$,
  $g\in L^{\infty}(0, T, H^2(\mathbb{R}))$
  and $\phi \in L^{2}(0, T, H^1(\mathbb{R}))$.
  Then, for any $(y, z) \in \mathbb{R}^2$,           
  $y<z$, and any $t\in[0, T]$ 
  we have
  
  a) \begin{align}
  \int_0^t\int_y^z  \phi \partial_{xx}  fdxd\tau
  = -
  \int_0^t\int_y^z \partial_x\phi\partial_x f
  dxd\tau
  +
  \int_0^t[\gamma_z^-( \phi\partial_x f)
  -\gamma_y^+(\phi\partial_x f)]d\tau\label{GREE1} 
   \end{align}
   \begin{align}
  \int_0^t\int_y^z sign(f)\partial_x(f\partial_x g)
  dxd\tau
  =\int_0^t [\gamma_z^-(\vert f
  \vert \partial_xg) - \gamma_y^+(\vert f
  \vert \partial_x g)]d\tau\label{GREE2}
   \end{align}
  
  b) Let 
  $A\in L^{\infty}(0, T, H^1(\mathbb{R}))$.
  Then $\vert \gamma^{\pm}_y(Af)\vert (\tau)
  \leq 
  \Vert A\Vert_{L^{\infty}(0, T, H^1(]y, y+1[))}
  \vert\gamma^{\pm}_y(f)\vert(\tau)$ for 
  almost every $\tau\in [0, T]$.
 \end{lemma}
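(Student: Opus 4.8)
The plan is to recognise that, up to the discontinuity of $sign$ at $0$, the integrand in \ref{GREE2} is a pure $x$-derivative, so that integration over $]y,z[$ collapses to the two trace terms on the right. The formal identity I am aiming for is
$$
sign(f)\,\partial_x(f\partial_x g)
= sign(f)\,\partial_x f\,\partial_x g + sign(f)\,f\,\partial_{xx}g
= \partial_x|f|\,\partial_x g + |f|\,\partial_{xx}g
= \partial_x\big(|f|\partial_x g\big),
$$
using $sign(f)\,\partial_x f=\partial_x|f|$ and $sign(f)\,f=|f|$. The genuine difficulty is that $sign$ is discontinuous and that $|f|$ inherits only $H^1$ (not $H^2$) regularity from $f$, so $|f|\partial_x g$ is merely $H^1$; this is exactly enough for one application of the fundamental theorem of calculus, but the chain rule $\partial_x|f|=sign(f)\partial_x f$ must be justified. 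I would handle this by a single convex regularisation, combined with the continuity of the trace operators $\gamma_y^{\pm}$ from Lemma \ref{TSSA}; no separate smoothing of $f$ and $g$ is then needed, since for fixed regularisation parameter all manipulations are valid Sobolev product-rule and trace statements.

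Concretely, I would fix $\beta_\epsilon(r)=\sqrt{r^2+\epsilon^2}$, so that $\beta_\epsilon'(r)=r/\sqrt{r^2+\epsilon^2}\to sign(r)$ with $|\beta_\epsilon'|\le 1$, $\beta_\epsilon(r)\to|r|$, and the remainder $\beta_\epsilon(r)-r\beta_\epsilon'(r)=\epsilon^2/\sqrt{r^2+\epsilon^2}\le\epsilon$. For fixed $\epsilon>0$ and a.e.\ $\tau$, one has $\beta_\epsilon(f)\in H^2(]y,z[)$ (since $\beta_\epsilon$ is smooth with bounded derivatives and, in one dimension, $H^2\hookrightarrow W^{1,\infty}$), hence $\beta_\epsilon(f)\partial_x g\in H^1(]y,z[)$ because $H^1$ is an algebra on a bounded interval. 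The product and chain rules then give, in $L^1(]y,z[)$,
$$
\beta_\epsilon'(f)\,\partial_x(f\partial_x g)
=\partial_x\big(\beta_\epsilon(f)\partial_x g\big)
-\big(\beta_\epsilon(f)-f\beta_\epsilon'(f)\big)\partial_{xx}g.
$$
Integrating over $]y,z[$ and then over $]0,t[$, the fundamental theorem of calculus for $H^1$ functions turns the first term on the right into $\int_0^t\big[\gamma_z^-(\beta_\epsilon(f)\partial_x g)-\gamma_y^+(\beta_\epsilon(f)\partial_x g)\big]\,d\tau$.

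It then remains to let $\epsilon\to0$. On the left, $\beta_\epsilon'(f)\to sign(f)$ pointwise with $|\beta_\epsilon'(f)|\le1$, while $\partial_x(f\partial_x g)=\partial_x f\,\partial_x g+f\,\partial_{xx}g\in L^2(]0,t[\times]y,z[)$ (using $f\in L^2_TH^2_x$, $g\in L^\infty_TH^2_x$ and the one-dimensional embedding $H^1_x\hookrightarrow L^\infty_x$), so dominated convergence yields the left-hand side of \ref{GREE2}. The correction term is bounded by $\epsilon\int_0^t\int_y^z|\partial_{xx}g|\,dx\,d\tau$ and hence vanishes. For the boundary term, $\beta_\epsilon(f)\to|f|$ uniformly in $L^2_x$ and $\beta_\epsilon'(f)\partial_x f\to\partial_x|f|$ in $L^2_x$ (dominated convergence) give $\beta_\epsilon(f)\to|f|$ in $L^2(0,T,H^1(]y,z[))$, whence $\beta_\epsilon(f)\partial_x g\to|f|\partial_x g$ in the same space; the continuity of $\gamma_y^{\pm}$ on $L^2(0,T,H^1)$ established via \ref{VGE} then passes the limit through the traces in $L^2(0,T)$, and integrating over $[0,t]$ (through $L^2\hookrightarrow L^1$, cf.\ \ref{VGF}) gives the right-hand side of \ref{GREE2}. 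I expect this trace passage to be the delicate step, since it is precisely where the loss of one derivative in $|f|$ is absorbed by the trace estimate; the interior limits are routine. Finally, identity \ref{GREE1} follows from the same $H^1$ product rule (now without regularisation), and part b) is an immediate application of Lemma \ref{TSSA} to $Af$ together with $H^1_x\hookrightarrow L^\infty_x$, which are the density and continuity arguments alluded to above.
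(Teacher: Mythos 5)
Your proof is correct, but it follows a genuinely different route from the paper's. The paper first proves the purely linear divergence identity
\begin{equation*}
\int_0^t\int_y^z \partial_x(F\partial_x G)\,dxd\tau
=\int_0^t [\gamma_z^-(F\partial_xG) - \gamma_y^+(F\partial_xG)]\,d\tau
\end{equation*}
for all $(F,G)\in L_T^2(H^1)\times L_T^{\infty}(H^2)$, by density from smooth pairs, using the Banach algebra property of $H^1(\mathbb{R})$ and the trace continuity $\ref{VGF}$; it then obtains $\ref{GREE2}$ in one line by substituting $F=\vert f\vert$, invoking as a known fact the a.e.\ chain rule $\partial_x\vert f\vert = sign(f)\partial_x f$ to write $sign(f)\partial_x(f\partial_x g)=\partial_x(\vert f\vert\partial_x g)$. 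You instead regularise the nonlinearity itself, via $\beta_\epsilon(r)=\sqrt{r^2+\epsilon^2}$, prove the identity for fixed $\epsilon$ by Sobolev calculus and the fundamental theorem of calculus, and pass to the limit using the $\epsilon$-uniform bounds and the trace continuity of lemma $\ref{TSSA}$. The trade-off: the paper's argument is shorter and isolates a reusable linear identity, but treats the $H^1$ chain rule as a black box; yours is self-contained, since the convergences $\beta_\epsilon(f)\to\vert f\vert$ and $\beta_\epsilon'(f)\partial_x f\to sign(f)\partial_x f$ in $L^2$, combined with closedness of the distributional derivative, re-prove that chain rule along the way (you should state this identification explicitly rather than attribute it to dominated convergence alone, since a priori the $L^2$-limit of $\partial_x\beta_\epsilon(f)$ is $sign(f)\partial_x f$ and its equality with $\partial_x\vert f\vert$ is a conclusion, not an input). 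Your method is in fact the exact analogue of what the paper itself does later, in the proof of lemma $\ref{L1}$, where the term $\int\int sign(u)\Delta u$ is handled with the regularisation $h_\epsilon(x)=x/\sqrt{x^2+\epsilon}$ in $\ref{NAZE}$--$\ref{mich}$, so it is fully consistent with the paper's toolbox. Two cosmetic points: the trace operators and the estimate $\ref{VGE}$ are formulated on the unit intervals $]y,y+1[$ and $]z-1,z[$, not on $]y,z[$, so your limit passage for the boundary terms should be localised accordingly; and note that $\beta_\epsilon(f(\tau))$ lies in $H^2$ of a bounded interval but not of $\mathbb{R}$ (it tends to $\epsilon$ at infinity), which your formulation on $]y,z[$ already accommodates.
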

 \begin{proof}(of $\ref{GREE2}$). We first prove that, for any 
$(F, G)\in L_T^2(H^1)\times 
L_T^{\infty}(H^2)$, we have 
\begin{align}
\int_0^t\int_y^z \partial_x(F\partial_x G) dxd\tau
  =\int_0^t [\gamma_z^-(F
  \partial_xG) - \gamma_y^+(F
  \partial_xG)]d\tau\label{OUM}
  \end{align}
  The case 
  $(F, G)\in C^{\infty}([0, T]\times
  \mathbb{R})^2$ is obvious. Since both 
  sides of $\ref{OUM}$ are 
  continuous with respect to 
  $(F, G)\in L_T^2(H^1)\times 
L_T^{\infty}(H^2)$, owing to  the fact that 
$H^1(\mathbb{R})$ is a Banach algebra and the continuity $\ref{VGF}$
of the trace functions,
we get equality
$\ref{OUM}$
by density. 
Now, for 
 $f\in L^2(0, T, H^1(\mathbb{R}))$,
  $g\in L^{\infty}(0, T, H^2(\mathbb{R}))$,
  we have $\vert f \vert\in L^2(0, T, H^1(\mathbb{R}))$ and 
  $\partial_x(f\partial_x g) sg(f) =
  \partial_x(\vert f\vert\partial_x g)$. Therefore, equality $\ref{GREE2}$ follows from
  equality
$\ref{OUM}$ with $F = \vert f \vert$
and $G = g$.
\end{proof}
We are ready to prove our main $L^1$ lemma.
\begin{lemma}\label{L1}
 Let $T>0$, $V_0\in H^2(\mathbb{R})$, $V_1\in H^1(\mathbb{R})$,
$u_0\in H^1(\mathbb{R})
 \cap L^1(\mathbb{R})$ and $u\in 
L_T^2(H^2) \cap H_T^1(L^2) \cap C^0([0, T], 
 H^1)$. Assume that
  $S(u, V_0, V_1)\in C^0([0,
T], H^2)\cap C^1([0,
T], H^1)$ and assume that 
function $(u,S(u, V_0, V_1))$ is a solution
of equations $\ref{eq5}$, $\ref{eq6}$, 
i.e 
$$u(t) = e^{t\Delta}u_0
 +
 \int_0^t e^{(t-\tau)\Delta}
 div\big(u\nabla S(u, V_0, V_1)\big)(\tau)d\tau
 $$
Then
$u\in C^0([0, T], 
 L^1(\mathbb{R}))$ and $\Vert u(t) \Vert_{L^1} \leq
  \Vert u_0 \Vert_{L^1}$, for any 
 $t\in [0, T]$.
  Moreover, when $\pm u_0 \geq 0$, we
   have $\pm u\geq 0$ and $\Vert u(t) \Vert_{L^1}
   = \Vert u_0\Vert_{L^1}$.
\end{lemma}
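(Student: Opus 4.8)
The starting point is that, under the stated regularity, the mild formulation \ref{eq5}, \ref{eq6} is equivalent to the pointwise identity $\partial_t u-\partial_{xx}u=\partial_x(u\,\partial_x V)$ holding in $L^2(0,T,L^2(\mathbb{R}))$, where $V=S(u,V_0,V_1)$. Indeed $u\in L_T^2(H^2)\cap H_T^1(L^2)$ gives $\partial_t u,\partial_{xx}u\in L_T^2(L^2)$, while $u\in C^0([0,T],H^1)$ and $\partial_x V\in C^0([0,T],H^1)$ make $u\,\partial_x V\in C^0([0,T],H^1)$ (product in the algebra $H^1(\mathbb{R})$), so $\partial_x(u\partial_x V)\in C^0([0,T],L^2)$. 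For later use I record that $\partial_x u\in L_T^2(H^1)$ and $|u|\,\partial_x V\in L_T^2(H^1)$ (using $\||u|\|_{H^1}=\|u\|_{H^1}$ and that $H^1(\mathbb{R})$ is an algebra). The plan is to prove a local-in-space $L^1$ inequality for $|u|$ on each slab $]0,t[\,\times\,]y,z[$ by a convex regularization of $|\cdot|$, then to let $y\to-\infty$, $z\to+\infty$ and kill the boundary contributions with the trace estimate \ref{VGF}.

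For $\epsilon>0$ let $\beta_\epsilon\in C^2(\mathbb{R})$ be convex and even, with $\beta_\epsilon(0)=0$, $|\beta_\epsilon'|\leq1$, $\beta_\epsilon''\geq0$, and $\beta_\epsilon(s)\to|s|$, $\beta_\epsilon'(s)\to sign(s)$ as $\epsilon\to0$. Multiplying the equation by $\beta_\epsilon'(u)$ and integrating over $]y,z[\,\times\,]0,t[$, the time term gives $\int_y^z[\beta_\epsilon(u(t))-\beta_\epsilon(u_0)]\,dx$ (chain rule, since $u\in H_T^1(L^2)$). For the diffusion term I integrate by parts on $]y,z[$ in the spirit of \ref{GREE1} (legitimate since $\beta_\epsilon'(u)\in L_T^2(H^1(]y,z[))$ for fixed $\epsilon$), obtaining the dissipative part $-\int_0^t\!\int_y^z\beta_\epsilon''(u)(\partial_x u)^2\,dx\,d\tau\leq0$, which I discard, plus the boundary term $\int_0^t[\gamma_z^-(\beta_\epsilon'(u)\partial_x u)-\gamma_y^+(\beta_\epsilon'(u)\partial_x u)]\,d\tau$. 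The crucial point is that, because $|\beta_\epsilon'|\leq1$, this boundary term is dominated pointwise in $\tau$ by $|\gamma_z^-(\partial_x u)|+|\gamma_y^+(\partial_x u)|$, uniformly in $\epsilon$. For the transport term I let $\epsilon\to0$: since $|\beta_\epsilon'|\leq1$, $\beta_\epsilon'(u)\to sign(u)$ a.e., and $\partial_x(u\partial_x V)=0$ a.e. on $\{u=0\}$, dominated convergence together with identity \ref{GREE2} turns it into $\int_0^t[\gamma_z^-(|u|\partial_x V)-\gamma_y^+(|u|\partial_x V)]\,d\tau$. Letting $\epsilon\to0$ in the remaining terms as well, I arrive at the local inequality
\[
\int_y^z|u(t)|\,dx\leq\int_y^z|u_0|\,dx+\|\gamma_z^-(\partial_x u)\|_{L^1(0,T)}+\|\gamma_y^+(\partial_x u)\|_{L^1(0,T)}+\|\gamma_z^-(|u|\partial_x V)\|_{L^1(0,T)}+\|\gamma_y^+(|u|\partial_x V)\|_{L^1(0,T)}.
\]

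Taking $y=-N$, $z=N$ with $N\in\mathbb{N}$ and applying \ref{VGF}, each boundary term is bounded by $C\sqrt T$ times a norm of $\partial_x u$ or $|u|\partial_x V$ over a single unit interval abutting $\pm N$; as both functions lie in $L_T^2(H^1(\mathbb{R}))$, these localized norms are tails of a convergent series and tend to $0$ as $N\to\infty$. By monotone convergence $\int_{-N}^N|u(t)|\,dx\uparrow\|u(t)\|_{L^1}$, so the inequality forces $u(t)\in L^1(\mathbb{R})$ with $\|u(t)\|_{L^1}\leq\|u_0\|_{L^1}$. Keeping $y$ fixed while sending only $z\to+\infty$ (and symmetrically) yields in addition the uniform tail bound $\sup_{t\in[0,T]}\int_{|x|\geq R}|u(t)|\,dx\to0$ as $R\to\infty$; combined with the $L^1_{loc}$ continuity inherited from $u\in C^0([0,T],L^2)$ via $\|u(t)-u(s)\|_{L^1(-R,R)}\leq\sqrt{2R}\,\|u(t)-u(s)\|_{L^2}$, this gives $u\in C^0([0,T],L^1(\mathbb{R}))$.

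Finally, integrating the equation over $]-N,N[\,\times\,]0,t[$ and using the one–dimensional fundamental theorem of calculus (each of $\partial_x u$ and $u\partial_x V$ lying in $H^1(]-N,N[)\hookrightarrow C^0$ for a.e. $\tau$) produces only the boundary terms $\int_0^t[\gamma_N^-(\partial_x u+u\partial_x V)-\gamma_{-N}^+(\partial_x u+u\partial_x V)]\,d\tau$, which again vanish as $N\to\infty$ by \ref{VGF}; hence mass is conserved, $\int_{\mathbb{R}}u(t)\,dx=\int_{\mathbb{R}}u_0\,dx$. If $u_0\geq0$ then $\int u_0=\|u_0\|_{L^1}$, and the chain $\|u_0\|_{L^1}=\int u(t)\leq\int|u(t)|=\|u(t)\|_{L^1}\leq\|u_0\|_{L^1}$ must be a string of equalities, forcing $|u(t)|=u(t)$ a.e., i.e. $u\geq0$ with $\|u(t)\|_{L^1}=\|u_0\|_{L^1}$; the case $u_0\leq0$ is symmetric. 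The main obstacle is precisely the diffusion boundary term: since $|u|\notin H^2$ one cannot take the trace of $\partial_x|u|$ directly, and the resolution is to carry the regularization $\beta_\epsilon$ through the trace step, dominating $\beta_\epsilon'(u)\partial_x u$ by $\partial_x u\in L_T^2(H^1)$ before invoking the trace estimate \ref{VGF}.
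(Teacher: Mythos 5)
Your proof is correct and follows essentially the same route as the paper's: localization on $]y,z[$, a regularization of the sign function whose diffusion boundary contribution is dominated uniformly in $\epsilon$ by $\vert\gamma^{\pm}(\partial_x u)\vert$, the Green-type identities \ref{GREE1} and \ref{GREE2}, the trace bound \ref{VGF} to kill the boundary terms as the interval grows, monotone convergence for the $L^1$ bound, and the same tail-estimate plus $L^2\rightarrow L^1_{loc}$ argument for continuity in $L^1$. The only cosmetic differences are that the paper keeps $sign(u)$ in the time and transport terms and regularizes only inside the diffusion term (via $h_\epsilon(x)=x/\sqrt{x^2+\epsilon}$, which is your $\beta_\epsilon'$), and it obtains positivity by rerunning the argument with $(.)^{\pm}$ in place of $sign$, whereas you regularize the whole identity from the start and deduce positivity from mass conservation together with the equality chain $\Vert u_0\Vert_{L^1}=\int u(t)\leq\Vert u(t)\Vert_{L^1}\leq\Vert u_0\Vert_{L^1}$ --- both immaterial variations.
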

\begin{proof} Let $(y, z)\in \mathbb{R}^2$
with $y<z$, and $t\in[0, T]$.  Since $u\in 
L_T^2(H^2)$ and 
$S(u, V_0, V_1)\in C^0([0,
T], H^2)$, we can apply formula
$\ref{GREE2}$ with $f = u$,
$g = S(u, V_0, V_1)$.
Hence, multiplying $\ref{eq5}$ by $sign(u)$ and
integrating on $[0, T]\times [y, z]$, we obtain
\begin{align}
   &\int_y^z \vert u \vert(t, x)dx
   = 
   \Vert u_0 \Vert_{L^1([y, z])}
   +
   \int_0^t [\gamma_z^-(\vert u \vert\partial_xS) - 
   \gamma_y^+(\vert u \vert\partial_xS)]
   d\tau
     +
   \int_0^t\int_y^z sign(u)\Delta udxd\tau
   \nonumber\\
   &\leq 
   \Vert u_0 \Vert_{L^1([y, z])}
   +
   \Vert\gamma_z^-(\vert u \vert\partial_xS) \Vert_{L^1([0, T])}
   +
   \Vert\gamma_y^+(\vert u \vert\partial_xS) \Vert_{L^1([0, T])}
   +
   \sup_{t\in[0, T]}
   \Big(
   \int_0^t\int_y^z sign(u)\Delta udxd\tau
   \Big)\label{MIK}
    \end{align}
    We majorize the last three terms
    in inequality $\ref{MIK}$. Using the 
    inequality $\ref{VGF}$, we get
    \begin{align}
    \Vert\gamma_z^-(\vert u \vert\partial_xS) \Vert_{L^1([0, T])}+
    \Vert\gamma_y^+(\vert u \vert\partial_xS) \Vert_{L^1([0, T])}\leq 
     C \sqrt{T}
    \Vert \vert u \vert\partial_xS
    \Vert_{L^2(0, T, H^1(]y, y+1[\cup]z-1, z[))}
    \label{VAPS}
    \end{align}
     Hence,  $\ref{VAPS}$ and
     $\vert u \vert\partial_xS
\in L^2(0, T, H^1(\mathbb{R}))$
     implies that
     \begin{align}
     \lim_{inf(\vert y \vert, 
 \vert z\vert) \rightarrow \infty}
    \big( \Vert\gamma_z^-(\vert u \vert\partial_xS) \Vert_{L^1([0, T])}
     +
     \Vert\gamma_y^+(\vert u \vert\partial_xS) \Vert_{L^1([0, T])} \big)= 0\label{VAB}
      \end{align}
  We next prove that 
 \begin{align}
\limsup_{inf(\vert y \vert, 
 \vert z\vert) \rightarrow \infty}
 \Big(\sup_{t\in [0, T]}
 \int_0^t\int_y^z sign(u)\Delta udxd\tau\Big)\leq0 \label{VAD}
  \end{align}
Set $h_{\epsilon}(x) = x/\sqrt{x^2+\epsilon}$ ($x\in \mathbb{R}$,
$\epsilon >0$). Due
to $\Delta u \in L^2([0, T]\times\mathbb{R})
\inj
L^1_{loc}([0, T]\times\mathbb{R})$, 
$\Vert h_{\epsilon}\Vert_{L^{\infty}}\leq 1$
and Lebesgue theorem, we have
\begin{align}
\int_0^t\int_y^z sign(u)\Delta udxd\tau
= 
\lim_{\epsilon\rightarrow 0}
\int_0^t\int_y^z h_{\epsilon}(u)
\Delta udxd\tau\label{NAZE}
\end{align}
Using 
$\ref{GREE1}$ with $f=u\in
L_T^2(H^2)$, $\phi = 
h_{\epsilon}(u)\in
L_T^2(H^1)$, majorizing, and appealing to
lemma $\ref{CSA}$ b), we obtain an
$(\epsilon, t)$-independent estimate 
   \begin{align}
   \int_0^t\int_y^z \Delta u h_{\epsilon}(u)dxd\tau
    &= -\int_0^t\int_y^z \vert \nabla u\vert^2
   h_{\epsilon}'(u)dxd\tau
   +\int_0^t[\gamma_z^{-}
   (h_{\epsilon}(u)\nabla u)-
     \gamma_y^{+}(
     h_{\epsilon}(u)\nabla u)]
     d\tau\nonumber\\
   &\leq
   \int_0^T[\vert \gamma_z^{-}
   (\nabla u) \vert+
   \vert \gamma_y^{+}
   (\nabla u) \vert]d\tau\label{mich}
      \end{align}
      Appealing to $\ref{VGF}$,
      we deduce from $\ref{mich}$ and $\ref{NAZE}$ that 
       \begin{align}
        \sup_{t\in [0, T]}\Big(
 \int_0^t\int_y^z sign(u)\Delta u (\tau, x)dxd\tau\Big)
 \leq C \sqrt{T}
  \Vert \nabla u 
   \Vert_{L^2(0, T, H^1(]y, y+1[\cup]z-1,z[)}\label{BJOUR}
       \end{align}   
    Hence, inequality $\ref{VAD}$ follows
    from $\ref{BJOUR}$ and
    $u \in 
   L^2(0, T, H^2(\mathbb{R}))$.  
   Set $y = -z$ 
   and let $z\rightarrow +\infty$
   in $\ref{MIK}$. Using  $\ref{VAB}$, 
   $\ref{VAD}$ and 
   the monotone convergence theorem, we get
   $\Vert u(t) \Vert_{L^1}
   \leq \Vert u_0\Vert_{L^1}$ for $t\in 
   [0, T]$. Replacing the sign function by
   the negative or positive part functions 
   $(.)^{\pm}$,
   or the
   constant function $1$, the same argument
   provides $\Vert (u)^{\pm}(t) \Vert_{L^1}
   \leq \Vert (u_0)^{\pm}\Vert_{L^1}$
   and $\int_{\mathbb{R}}u(t, x)dx = 
   \int_{\mathbb{R}}u_0(x)dx$. In the 
   particular case $\pm u_0 \geq 0$, we
   recover $\pm u(t)\geq 0$ and $\Vert u(t) 
   \Vert_{L^1}
   = \Vert u_0\Vert_{L^1}$ for any $t\in 
   [0, T]$.
   
  Finally, appealing to 
   $\ref{MIK}$, $\ref{VAB}$,
   $\ref{VAD}$ and $u_0\in L^1$, we have
    $sup_{\tau\in [0, T]}\Vert 
   u(\tau)\Vert_{L^1(]-\infty, -x[\cup
   ]x, +\infty[)}\rightarrow 0$ when
   $x\rightarrow +\infty$. Hence, noticing that
   $u\in 
   C^0(0, T, L^2(\mathbb{R}))
   \inj
   C^0(0, T, L^1_{loc}(\mathbb{R}))$, we easily
   obtain $u\in C^0(0, T, L^1(\mathbb{R}))$
\end{proof}
It follows that
\begin{theorem}\label{n1}
 Let $T>0$, $V_0\in H^2(\mathbb{R})$, $V_1\in H^1(\mathbb{R})$
 and $u_0\in H^1(\mathbb{R})
 \cap L^1(\mathbb{R})$. The problem
 
find $u\in L_T^2(H^1)$ such that
 $$
 u(t) = e^{t\Delta}u_0
 +
 \int_0^t e^{(t-\tau)\Delta}
 div\big(u\nabla S(u, V_0, V_1)\big)(\tau)d\tau
 $$
 admits exactly one solution.
 Moreover, $u \in 
L_T^2(H^2) \cap H_T^1(L^2) \cap C^0([0, T], 
 H^1 \cap L^1)$ and $\Vert u(t) \Vert_{L^1_x} \leq
  \Vert u_0 \Vert_{L^1}$.
  Last, when $\pm u_0 \geq 0$, we
   have $\pm u\geq 0$ and $\Vert u(t) \Vert_{L^1}
   = \Vert u_0\Vert_{L^1}$, $0\leq t\leq T$.
\end{theorem}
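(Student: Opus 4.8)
The plan is to combine a short-time fixed-point construction with the a priori $L^1$ bound of Lemma \ref{L1} and Gagliardo--Nirenberg inequalities, exactly as announced in the introduction. First I would reproduce the structure of Section \ref{exit} in the nonhomogeneous scale: set $E_{T'}=L^2_{T'}(H^1(\mathbb{R}))$ and look for a fixed point of $N(u)=e^{t\Delta}u_0+\mathscr{B}_{T'}(u,u)+\mathscr{L}_{T'}(u)$, using linearity of the wave equation, $S(u,V_0,V_1)=S(u,0,0)+S(0,V_0,V_1)$. To bound $\mathscr{B}_{T'}$ and $\mathscr{L}_{T'}$ I would use the nonhomogeneous version of Proposition \ref{heat} (with $C=C_{T'}$), the one-dimensional embedding $H^1\hookrightarrow L^\infty$ and the fact that $H^1(\mathbb{R})$ is a Banach algebra, together with the elementary energy estimate for the wave equation $\Vert\partial_x S(w,0,0)(t)\Vert_{L^2}\le\int_0^t\Vert w\Vert_{L^2}\,d\tau$ and $\Vert\partial_x S(0,V_0,V_1)(t)\Vert_{L^2}\le(\Vert\partial_x V_0\Vert_{L^2}^2+\Vert V_1\Vert_{L^2}^2)^{1/2}$. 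Estimating the source in $L^1_{T'}(L^2)$ and invoking Hölder in time yields a factor $T'$ for $\mathscr{B}_{T'}$ and a factor $\sqrt{T'}$ for $\mathscr{L}_{T'}$, so both operator norms tend to $0$ as $T'\to0$. This is the point that allows \emph{arbitrary} data: even for large $V_0,V_1$, for $T'$ small enough (depending only on the data norms) Lemma \ref{bil} applies and produces a unique solution on $[0,T']$.

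Next I would run a regularity bootstrap to reach the function spaces required by Lemma \ref{L1}. Starting from $u\in E_{T'}$ and $\partial_x S(u,V_0,V_1)\in L^\infty_{T'}(H^1)$ (by wave regularity with $V_0\in H^2$, $V_1\in H^1$ and source $u\in L^1_{T'}(H^1)$), the algebra property gives $u\,\partial_x S(u,V_0,V_1)\in L^2_{T'}(H^1)$, hence $f=\partial_x(u\,\partial_x S)\in L^2_{T'}(L^2)$. Applying Proposition \ref{heat} with $\sigma=1$, $p=2$ upgrades $u$ to $L^2_{T'}(H^2)\cap L^\infty_{T'}(H^1)$, and the equation then gives $\partial_t u\in L^2_{T'}(L^2)$, so that $u\in L^2_{T'}(H^2)\cap H^1_{T'}(L^2)\hookrightarrow C^0([0,T'],H^1)$ and $S(u,V_0,V_1)\in C^0H^2\cap C^1H^1$. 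With all these properties in hand, Lemma \ref{L1} applies and delivers $u\in C^0([0,T'],L^1)$, the bound $\Vert u(t)\Vert_{L^1}\le\Vert u_0\Vert_{L^1}$, and the sign and mass statements.

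The decisive step is the globalization on $[0,T]$. I would argue by continuation: on the maximal interval $[0,T^*)\subset[0,T]$ it suffices to bound $\Vert u(t)\Vert_{H^1}$, $\Vert\partial_x V(t)\Vert_{L^2}+\Vert\partial_t V(t)\Vert_{L^2}$ and $\int_0^{T^*}\Vert u\Vert_{H^2}^2$, since the local existence time depends only on these and on $\Vert u_0\Vert_{L^1}$, already controlled by Lemma \ref{L1}. The $L^2$ energy identity $\tfrac12\tfrac{d}{dt}\Vert u\Vert_{L^2}^2+\Vert\partial_x u\Vert_{L^2}^2=-\int u\,\partial_x u\,\partial_x S$, the Gagliardo--Nirenberg inequality $\Vert u\Vert_{L^\infty}\le C\Vert u\Vert_{L^1}^{1/3}\Vert\partial_x u\Vert_{L^2}^{2/3}$, the $L^1$ bound and Young's inequality give
\[
\frac{d}{dt}\Vert u\Vert_{L^2}^2+\Vert\partial_x u\Vert_{L^2}^2\le C\Vert u_0\Vert_{L^1}^{2}\,\Vert\partial_x S(u,V_0,V_1)\Vert_{L^2}^{6}.
\]
Coupling this with the wave energy bound $\Vert\partial_x S(t)\Vert_{L^2}\le C(\Vert\partial_x V_0\Vert_{L^2}+\Vert V_1\Vert_{L^2})+\int_0^t\Vert u\Vert_{L^2}\,d\tau$ and with the superlinear coercivity $\Vert\partial_x u\Vert_{L^2}^2\ge c\Vert u_0\Vert_{L^1}^{-4}\Vert u\Vert_{L^2}^{6}$ (again Gagliardo--Nirenberg plus the $L^1$ bound), I would integrate in time and use Hölder to bound $\int_0^t\Vert u\Vert_{L^2}$ by the dissipation integral $\int_0^t\Vert u\Vert_{L^2}^6$; the cubic dissipation then absorbs the wave forcing on any time interval whose length is controlled \emph{solely} by the conserved quantity $\Vert u_0\Vert_{L^1}$, independently of the (possibly growing) potential energy. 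Iterating finitely many such steps covers $[0,T]$ and bounds $\Vert u(t)\Vert_{L^2}$ and $\int_0^{T^*}\Vert\partial_x u\Vert_{L^2}^2$; a parallel $\dot H^1$ estimate, obtained by testing the equation against $-\partial_{xx}u$ and controlling $\Vert\partial_{xx}V\Vert_{L^2}$ through the $H^2$ energy of the wave equation, then bounds $\Vert\partial_x u(t)\Vert_{L^2}$ and $\int_0^{T^*}\Vert\partial_{xx}u\Vert_{L^2}^2$. These bounds contradict the maximality of $T^*$ unless $T^*=T$.

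For uniqueness in $L^2_T(H^1)$ I would take two solutions, subtract the mild formulations, and estimate the difference $w=u_1-u_2$ with the same product and wave estimates as above; the $\sqrt{t}$ and $t$ smallness give $\Vert w\Vert_{L^2_t H^1}\le\varepsilon(t)\Vert w\Vert_{L^2_t H^1}$ with $\varepsilon(t)\to0$, forcing $w=0$ on a short interval, which is then propagated to $[0,T]$. The hard part will be the globalization, and more precisely the interplay between the cumulative, nonlocal growth of $\partial_x V$ (which depends on $\int_0^t\Vert u\Vert_{L^2}$) and the dissipation: the estimate does \emph{not} close on $[0,T]$ in one stroke for large data, and the essential structural fact that rescues it is that the time step of the energy argument is governed by the conserved $L^1$ mass of Lemma \ref{L1} rather than by the growing potential, so that finitely many steps reach any prescribed $T$.
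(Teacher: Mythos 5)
Your proposal is correct, and its skeleton is the same as the paper's: a fixed point for $u\mapsto e^{t\Delta}u_0+\mathscr{B}_{T'}(u,u)+\mathscr{L}_{T'}(u)$ in $L^2_{T'}(H^1)$ via Lemma \ref{bil}, with operator norms that vanish as $T'\to 0$ (the paper gets $O(\sqrt{T'})$ for both operators from d'Alembert's formula \ref{hap}; your wave-energy route gives $O(T')$ and $O(\sqrt{T'})$ --- either way arbitrary data are admissible); then the bootstrap to $u\in L^2(H^2)\cap H^1(L^2)\cap C^0(H^1)$ and $S\in C^0(H^2)\cap C^1(H^1)$ so that Lemma \ref{L1} applies; then a Gagliardo--Nirenberg energy estimate for globalization. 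One point where you are actually more complete than the paper: Lemma \ref{bil} gives uniqueness only in a ball, and the paper waves the full uniqueness in $L^2_T(H^1)$ away as ``standard''; your subtraction-and-propagation argument is exactly the missing standard step.

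The genuine divergence is in the globalization, and there your self-assessment is off. The paper bounds $\Vert\partial_x S\Vert_{L^4}$ pointwise in time via \ref{hap}, uses $\Vert u\Vert_{L^4}^4\leq C\Vert u_0\Vert_{L^1}^2\Vert\partial_x u\Vert_{L^2}^2$, writes $\int_0^t\Vert\partial_x u\Vert_{L^2}^{1/2}d\tau\leq t^{3/4}\bigl(\int_0^t\Vert\partial_x u\Vert_{L^2}^2d\tau\bigr)^{1/4}$, and after Young's inequality the right-hand side of \ref{flac} is \emph{linear} in $z(t)=\Vert u(t)\Vert_{L^2}^2+\int_0^t\Vert\partial_x u\Vert_{L^2}^2d\tau$ with coefficient $Ct^3\Vert u_0\Vert_{L^1}^2$, so Gronwall closes the a priori estimate on all of $[0,T^*)$ in a single pass --- no iteration. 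Your version (wave energy in $L^2$, $\Vert u\Vert_{L^\infty}\leq C\Vert u\Vert_{L^1}^{1/3}\Vert\partial_x u\Vert_{L^2}^{2/3}$, coercivity $\Vert\partial_x u\Vert_{L^2}^2\geq c\Vert u_0\Vert_{L^1}^{-4}\Vert u\Vert_{L^2}^6$, absorption on steps of length $\sim\Vert u_0\Vert_{L^1}^{-1}$, finitely many iterations) does work: the absorption condition is $C\Vert u_0\Vert_{L^1}^6t^6\leq 1/2$, so the step length indeed depends only on the mass. But your closing claim that the estimate ``does not close in one stroke'' is wrong: after your own H\"older step, $\bigl(\int_0^t\Vert u\Vert_{L^2}d\tau\bigr)^6\leq t^5\int_0^t\Vert u\Vert_{L^2}^6d\tau\leq C\Vert u_0\Vert_{L^1}^4t^5\int_0^t\Vert\partial_x u\Vert_{L^2}^2d\tau$, so the wave forcing is again linear in the dissipation integral and Gronwall finishes on $[0,T]$ directly --- this linearization by the $L^1$ bound, not the mass-controlled step size, is the structural fact that saves the argument, and it is exactly what the paper exploits. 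Finally, your parallel $\dot H^1$ estimate is superfluous: for continuation it suffices to bound $\Vert u(t)\Vert_{L^2}$ and $\int_0^{T^*}\Vert u\Vert_{H^1}^2dt$, since by the wave energy estimates these control the $H^2\times H^1$ norms of the wave data at the restart time, and the local existence time from Lemma \ref{bil} depends only on those quantities.
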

\begin{proof}
Step 1 (local  existence). For any $0<T<1$, 
$u\in L_T^2(H^1)$ and 
$w\in L_T^2(H^1)$, 
proposition $\ref{heat}$ provides
\begin{align}
\Vert
\mathscr{B}_T(u, w)+\mathscr{L}_T(u)
\Vert_{L_T^2(H^1)}
&\leq
C 
\Vert
 div\big(u\nabla S(w, V_0, V_1)\big)
\Vert_{L_T^1(L^2)}\nonumber\\
&\leq C
\Vert
u\nabla S(w, V_0, V_1)
\Vert_{L_T^1(H^1)}\nonumber\\
&\leq C
\Vert
u
\Vert_{L_T^2(H^1)}
\Vert
\nabla S(w, V_0, V_1)
\Vert_{L_T^2(H^1)}\label{ener}
\end{align}
 Note that
 \begin{align}
 S(w, V_0, V_1)(t, x) = 
 \int_0^t\int_{x-(t-\tau)}^{x+(t-\tau)}
 w(\tau, \xi)d\xi d\tau
 +\frac{1}{2} \big(V_0(x+t)+V_0(x-t)
 +\int_{x-t}^{x+t}V_1(s)ds\big)\label{hip}
 \end{align}
 Assume first that $V_0$, $V_1$ and 
 $u$ are infinitely differentiable. We have
 \begin{align}
 \nabla S(w, V_0, V_1)(t, x)
 =
 &\int_0^t
 \big(w(\tau, x+(t-\tau))
 -
  w(\tau, x-(t-\tau)\big)
 d\tau\nonumber\\
 &+\frac{1}{2} \big(V_0^{'}(x+t)+V_0^{'}(x-t)
 +V_1(x+t)
 -V_1(x-t)
 \big)\label{hap}
 \end{align}
 Arguing by density, the formula 
 $\ref{hap}$ holds 
for the distributional derivative
$\nabla S$  under the assumptions of theorem  $\ref{n1}$, and we have
 \begin{align}
 \Vert
\nabla S(w, V_0, V_1)
\Vert_{L_T^2(H^1)}
\leq 
C
 \sqrt{T}
 \big(
 \Vert
 w
 \Vert_{L_T^2(H^1)}
 +
 \Vert
 V_0^{'}
 \Vert_{H^1}
 + \Vert
 V_1
 \Vert_{H^1}
 \big)\label{hup}
 \end{align}
 From $\ref{ener}$ and $\ref{hup}$
 we deduce that
  \begin{align}
  \Vert
\mathscr{B}_T(u, w)+\mathscr{L}_T(u)
\Vert_{L_T^2(H^1)}
&\leq
C \sqrt{T}
\Vert
u
\Vert_{L_T^2(H^1)}
\big(
\Vert
 w
 \Vert_{L_T^2(H^1)}
 +
 \Vert
 V_0^{'}
 \Vert_{H^1}
 + \Vert
 V_1
 \Vert_{H^1}
\big)\label{bip}
   \end{align}
   Finally, notice that
    \begin{align}
   \Vert
e^{t\Delta}u_0
\Vert_{L_T^2(H^1)}\leq
\Vert
u_0
\Vert_{L^2}\label{bup}
 \end{align}
Hence, for $T> 0$ small enough,
the existence and uniqueness of a solution
$u\in L_T^2(H^1)$ follows from lemma
$\ref{bil}$, inequalities 
$\ref{bip}$ and $\ref{bup}$.

Next, 
since $u \in L_T^2(H^1)$,
$V_0 \in H^2$ and $V_1 \in H^1$
we get 
\begin{align}
S(u, V_0, V_1)\in C^0([0,
T], H^2)\cap C^1([0,
T], H^1)\label{reg1}
\end{align}
 Therefore
$div(u\nabla S(u, V_0, V_1))\in L_T^2(L^2)$.
Due to $u_0 \in H^1$, equation
$\partial_t u - \Delta u 
= div(u\nabla S(u, V_0, V_1))$, 
proposition $\ref{heat}$ (and by interpolation),
we thus obtain 
\begin{align}
u \in 
L_T^2(H^2) \cap H_T^1(L^2) \cap C^0([0, T], 
 H^1)\label{reg2}
\end{align}
Step 2 (global existence). Let $T^* >0$,  the maximal time of existence 
 of a mild solution 
  endowed with the properties $\ref{reg1}$, 
 $\ref{reg2}$. In particular,
 $u\in C^0([0, T], L^1(\mathbb{R}))$ 
 and $\Vert u(t) \Vert_{L^1_x} \leq
 \Vert u_0 \Vert_{L^1}$ for $0\leq t\leq T
 <T^*$ (see lemma $\ref{L1}$).
 In order to prove that $T^* =\infty$,
 and due  to $\ref{reg1}$, $\ref{reg2}$ 
 and lemma $\ref{L1}$, we essentially 
 have to find an a priori estimate on
 $\Vert
 u
 \Vert_{L^2_{T}(H^1)}$
 for $0< T< T^*$. 
 We multiply equation 
 $\ref{eq5}$ by $u$ and integrate we respect to
 $x$. Appealing to $\ref{reg1}$ and $\ref{reg2}$,
 we get
 \begin{align}
 \frac{1}{2}\frac{d}{dt}\Vert u
 \Vert_{L_x^2}^2
 +\int_{\mathbb{R}}\vert 
 \partial_xu\vert^2dx
 &\leq
 \vert
 \int_{\mathbb{R}}
 u \partial_xu \partial_xSdx
 \vert\nonumber\\
  &\leq
  \Vert u
 \Vert_{L_x^4}
 \Vert \partial_xu
 \Vert_{L_x^2}
 \Vert \partial_xS
 \Vert_{L_x^4}\label{flip}
  \end{align}
We now estimate 
$ \Vert u
 \Vert_{L_x^4}$ and $\Vert \partial_xS
 \Vert_{L_x^4}$. By Gagliardo-Nirenberg
 inequalities and the $\textrm{L}^1$ 
 properties of $u$,
 we have
 \begin{align}
 \Vert u
 \Vert_{L_x^4}^4
 \leq C
 \Vert u
 \Vert_{L_x^1}^2
 \Vert \partial_xu
 \Vert_{L_x^2}^2
 \leq 
 C
 \Vert u_0
 \Vert_{L^1}^2
 \Vert \partial_xu
 \Vert_{L_x^2}^2\label{GN}
 \end{align}
 Using equation $\ref{hap}$ with
 $w=u$, we obtain for any $0< t< T$
  \begin{align}
  \Vert \partial_xS(t)
 \Vert_{L^4}
 \leq
 C\big(
  \Vert u
 \Vert_{L^{1}(0, t, L^4)}
 + \Vert V_{0}^{'}
 \Vert_{L^4}
 +\Vert V_{1}
 \Vert_{L^4}
 \big)
 \label{sta}
   \end{align}
   Invoking inequality $\ref{GN}$, this implies that
 \begin{align} 
  \Vert \partial_xS(t)
 \Vert_{L^4}
 &\leq
 C\big(
 \Vert u_0
 \Vert_{L^1}^{1/2}
 \int_0^t\Vert \partial_xu(\tau)
 \Vert_{L^2}^{1/2}d\tau
 + \Vert V_{0}^{'}
 \Vert_{L^4}
 +\Vert V_{1}
 \Vert_{L^4}
 \big)\nonumber\\
 &\leq
 C\big(
 \Vert u_0
 \Vert_{L^1}^{1/2}
 \Vert \partial_xu
 \Vert_{L^2(]0, T[\times\mathbb{R})}^{1/2}
 t^{3/4}
 + \Vert V_{0}^{'}
 \Vert_{L^4}
 +\Vert V_{1}
 \Vert_{L^4}\big)
 \label{stuc}
   \end{align} 
   Therefore, $\ref{flip}$, 
   $\ref{GN}$, $\ref{stuc}$ and injection
   $H^{1}(\mathbb{R})\inj 
   L^4(\mathbb{R})$ provide
    \begin{align}
 \frac{1}{2}\frac{d}{dt}\Vert u
 \Vert_{L_x^2}^2
 +
  \Vert \partial_xu(t)
 \Vert_{L_x^2}^{2}
 &\leq C
  \Vert u_0
 \Vert_{L^1}^{1/2}
 \Vert \partial_xu(t)
 \Vert_{L^2}^{3/2}
 \big(
 \Vert u_0
 \Vert_{L^1}^{1/2}
 \Vert \partial_xu
 \Vert_{L^2(]0, T[\times\mathbb{R})}^{1/2}
 t^{3/4}
 + \Vert V_{0}^{'}
 \Vert_{L^4}
 +\Vert V_{1}
 \Vert_{L^4}\big)\nonumber\\
 &\leq
 \eta 
   \Vert u_0
 \Vert_{L^1}^{2/3}
 \Vert \partial_xu(t)
 \Vert_{L^2}^{2}\nonumber\\
 &\phantom{HHHHH}+
 C_{\eta} 
 \big(t^{3}\Vert u_0
 \Vert_{L^1}^{2}
 \Vert \partial_xu
 \Vert_{L^2(]0, T[\times\mathbb{R})}^{2}
 + \Vert V_{0}
 \Vert_{H^2}^4
 +\Vert V_{1}
 \Vert_{H^1}^4
 \big)
 \label{flac}
  \end{align}
  Take $\eta$ such that $0<\eta\Vert u_0\Vert_{L^1}^{2/3}\leq 1/2$.
  Hiding the term
  $\eta 
   \Vert u_0
 \Vert_{L^1}^{2/3}
 \Vert \partial_xu(t)
 \Vert_{L^2}^{2}$ in the left
 hand side of $\ref{flac}$, setting 
 $z(t) = \Vert u(t)
 \Vert_{L^2}^2
 +
 \int_{0}^{t}
  \Vert \partial_xu(\tau)
 \Vert_{L^2}^{2}d\tau$ and using the
 Gronwall inequality we obtain the required 
 a priori estimate on 
 $\Vert
 u
 \Vert_{L^2_{T}(H^1)}$. The end of the proof
 is standard and we omit further details. 
 \end{proof}

\end{document}